\newcommand{\nN}{\mathbb{N}}                     
\newcommand{\nZ}{\mathbb{Z}}                     
\newcommand{\nR}{\mathbb{R}}                     
\newcommand{\nQ}{\mathbb{Q}}                     
\newcommand{\nP}{\mathbb{P}}                     
\newcommand{\sF}{\mathscr{F}}
\newcommand{\sO}{\mathscr{O}}                    
\newcommand{\sI}{\mathscr{I}}                    
\newcommand{\mJ}{\mathcal{J}}                    
\DeclareMathOperator{\Bl}{Bl}                    
\DeclareMathOperator{\mld}{mld}                  
\DeclareMathOperator{\Supp}{Supp}                
\DeclareMathOperator{\reg}{reg}                  
\DeclareMathOperator{\codim}{codim}              
\newcommand{\ord}{\mbox{ord}}                    
\DeclareMathOperator{\val}{Val}                  
\DeclareMathOperator{\Val}{Val}                  
\DeclareMathOperator{\Nlci}{Nlci}                  
\newcounter{IntroThm}                           
\newtheorem{introthm}[IntroThm]{Theorem}        
\newtheorem{introcor}[IntroThm]{Corollary}
\newtheorem{theorem}{Theorem}[section]
\newtheorem{lemma}[theorem]{Lemma}
\newtheorem{corollary}[theorem]{Corollary}
\newtheorem{conjecture}[theorem]{Conjecture}
\newtheorem*{Fujita}{Fujita's Vanishing Theorem}
\newtheorem*{Nadel}{Nadel's Vanishing Theorem}
\theoremstyle{definition}
\newtheorem{definition}[theorem]{Definition}
\newtheorem{remark}[theorem]{Remark}
\newtheorem{example}[theorem]{Example}
\numberwithin{equation}{section}
\begin{document}
\title[A Vanishing Theorem and Asymptotic Regularity]{A Vanishing Theorem and Asymptotic Regularity of Powers of Ideal Sheaves}

\author{Wenbo Niu}
\address{Department of Mathematics, Statistics, and Computer
Science, University of Illinois at Chicago, 851 South Morgan Street,
Chicago, IL 60607-7045, USA}

\email{wniu2@uic.edu}

\subjclass[2010]{Primary  14Q20,  13A30}

\keywords{Regularity, powers of ideals, vanishing theorem, symbolic powers}

\date{}
\maketitle
\begin{abstract} Let $\sI$ be an ideal sheaf on $\nP^n$. In the first
part of this paper, we bound the asymptotic regularity of powers of
$\sI$ as $sp\leq \reg \sI^p\leq sp+e$, where $e$ is a constant and
$s$ is the $s$-invariant of $\sI$. We also give the same upper bound
for the asymptotic regularity of symbolic powers of $\sI$ under some
conditions. In the second part, by using multiplier ideal sheaves,
we give a vanishing theorem of powers of $\sI$ when it defines a
local complete intersection subvariety with log canonical
singularities.
\end{abstract}
\section{introduction}
\noindent Throughout this paper, we work over an algebraically
closed field with characteristic zero. Given an ideal sheaf $\sI$ on
the projective space $\nP^n$, one invariant to measure its
complexity is the Castelnuovo-Mumford regularity (or simply
regularity), denoted by $\reg \sI$. It was introduced by Mumford
in \cite[Chapter 14]{Mumford:LecCurOnSurf} and defined as the
minimal number $m$ such that $H^i(\nP^n,\sI(m-i))=0$ for all $i>0$.

The positivity of $\sI$ is measured by the $s$-invariant which is
the reciprocal of the Seshadri constant of $\sI$ with respect to the
hyperplane divisor and denoted by $s=s(\sI)$. The result of
Cutkosky, Ein and Lazarsfeld \cite{Ein:PosiComlIdeal} has shown that
asymptotically $\reg \sI^p$ is a linear-like function with the slope
$s$, that is
$$\lim_{p\rightarrow \infty}\frac{\reg\sI^p}{p}=s.$$

An interesting question is, for $p$ sufficiently large, if $\reg
\sI^p$ will be actually a linear function, namely $\reg \sI^p=sp+e$
for some constant number $e$. The same question has been answered in
commutative algebra for homogeneous ideals. Precisely, given an
arbitrary homogenous ideal $I$ of the polynomial ring, for $p$
sufficiently large there exist constants $d$ and $e$ such that $\reg
I^p=dp+e$ \cite[Theorem 3.1]{Cutkosky:AsymReg}, \cite[Theorem
5]{Kodiyalam:AymReg}. However, for the ideal sheaf $\sI$ this result
is not true in general. Several examples (e.g.
\cite{Cutkosky:AsymReg}, \cite{Ein:PosiComlIdeal}) have shown that
$\reg \sI^p$ is far from being a linear function even when $p$ is
sufficiently large. This is mainly due to the fact that $s$ could be irrational.
Thus the best one can hope for is that the difference between $\reg
\sI^p$ and $sp$ can be bounded by constants
\cite{Cutkosky:AsyRegPts}.

In the first part of this paper, we show that the asymptotic
regularity of $\sI$ can indeed be bounded by linear functions of the
slope $s$.

\begin{introthm} Let $\sI$ be an ideal sheaf on $\nP^n$ and let
$s=s(\sI)$ be the $s$-invariant. Then there exists a constant $e$
such that for all $p\geq 1$, one has
$$sp\leq \reg \sI^p\leq sp +e.$$
\end{introthm}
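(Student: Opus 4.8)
The plan is to pass to the blow-up $\pi\colon X=\Bl_{\sI}\nP^n\to\nP^n$ (normalized if necessary), on which $\sI\cdot\sO_X=\sO_X(-E)$ with $-E$ relatively ample, $\pi^*H$ the pullback of the hyperplane class, and $s$ characterized as the \emph{nef threshold}: by definition of the $s$-invariant, $s\pi^*H-E$ is nef and $s$ is least with this property. Both inequalities flow from the positivity of $\sO_X(m\pi^*H-pE)$, whose sections on $\nP^n$ compute (up to integral closure) the twists of $\sI^p$. For the lower bound, set $m=\reg\sI^p$, so $\sI^p(m)$ is globally generated; pulling the defining surjection back along $\pi$ shows that $\sO_X(m\pi^*H-pE)=\pi^*\sO(m)\otimes\sO_X(-pE)$ is globally generated, hence nef. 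Therefore $\tfrac{m}{p}\pi^*H-E$ is nef, and minimality of $s$ forces $\tfrac{m}{p}\ge s$, i.e. $sp\le\reg\sI^p$.

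For the upper bound I would fix $a_0$ with $A:=a_0\pi^*H-E$ ample (possible since $-E$ is $\pi$-ample) and apply Fujita's Vanishing Theorem to the sheaf $\sO_X$ and $A$: there is a constant $m_0$, independent of $p$, with $H^i\bigl(X,\sO_X(m_0A)\otimes P\bigr)=0$ for every nef line bundle $P$ and every $i>0$. The point is to exhibit $\sO_X(m\pi^*H-pE)$ in this form. The residual divisor $(m-m_0a_0)\pi^*H-(p-m_0)E$ equals $(p-m_0)(t-s)\,\pi^*H+(p-m_0)(s\pi^*H-E)$ with $t=\tfrac{m-m_0a_0}{p-m_0}$, a nonnegative combination of the nef classes $\pi^*H$ and $s\pi^*H-E$ as soon as $p\ge m_0$ and $m\ge sp+m_0(a_0-s)$. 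Feeding this nef class into Fujita's theorem yields $H^i(X,\sO_X(m\pi^*H-pE))=0$ for all $i>0$ once $m\ge sp+e_0$, where $e_0:=m_0(a_0-s)$.

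To return to $\nP^n$ I would use the Leray spectral sequence together with relative Serre vanishing: since $-E$ is $\pi$-ample, $R^j\pi_*\sO_X(-pE)=0$ for $j>0$ and all $p$ past a fixed bound $p_1$, while $\pi_*\sO_X(-pE)=\overline{\sI^p}$. The projection formula then gives $H^i(X,\sO_X(m\pi^*H-pE))\cong H^i(\nP^n,\overline{\sI^p}(m))$, and shifting $m\mapsto m-i$ to match the definition of regularity shows $\reg\overline{\sI^p}\le sp+e_0+n$ for every $p\ge\max(p_1,m_0)$.

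Two points require care, and the second is the genuine obstacle. First is the \emph{uniformity} of the constant: ordinary Serre vanishing would force $m_0$ to depend on $p$ and recover only the $o(p)$ estimate of Cutkosky--Ein--Lazarsfeld, whereas it is precisely the uniformity of Fujita's theorem over \emph{all} nef $P$ that collapses this to the fixed $e_0$. Second, the argument controls the integral closure $\overline{\sI^p}$ rather than $\sI^p$ itself. To bridge the gap I would invoke a uniform Brian\c{c}on--Skoda / uniform Artin--Rees comparison, producing a constant $c$ with $\overline{\sI^{p+c}}\subseteq\sI^p\subseteq\overline{\sI^p}$ and allowing the regularity of $\sI^p$ to be controlled in terms of those of $\overline{\sI^{p}}$ and $\overline{\sI^{p+c}}$ at the cost of enlarging the constant; the finitely many powers $p<\max(p_1,m_0)$ are then absorbed into the final $e$. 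Making this comparison effective---so that the discrepancy between $\sI^p$ and its integral closure is genuinely bounded independently of $p$, and so that the regularity bound survives the sandwiching---is the main difficulty I anticipate.
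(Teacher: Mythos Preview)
Your overall strategy---lower bound via nefness of the pullback, upper bound via Fujita's vanishing on the blow-up with the ample-plus-nef decomposition of $m\pi^*H-pE$---is exactly the paper's approach, and your decomposition is in fact a bit cleaner than the paper's ceiling-function bookkeeping.

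The ``genuine obstacle'' you flag, however, is self-inflicted. You write $\pi_*\sO_X(-pE)=\overline{\sI^p}$, but that identity holds only on the \emph{normalization} of the blow-up. On the (un-normalized) blow-up $W=\Bl_{\sI}\nP^n=\Proj\bigl(\bigoplus_{p\ge 0}\sI^p\bigr)$ one has $\sO_W(-pE)=\sO_W(p)$, and standard Proj theory gives $\mu_*\sO_W(-pE)=\sI^p$ for all $p\gg 0$; combined with $R^j\mu_*\sO_W(-pE)=0$ for $j>0$ and $p\gg 0$ (relative Serre, since $-E$ is $\mu$-ample), Leray immediately yields $H^i(W,\sO_W(m\mu^*H-pE))\cong H^i(\nP^n,\sI^p(m))$. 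This is precisely the content of \cite[Lemma~3.3]{Ein:PosiComlIdeal}, which the paper invokes at this step. So simply do not normalize: the integral-closure detour, the Brian\c{c}on--Skoda comparison, and the sandwiching are all unnecessary. (The paper in fact records the integral-closure version as a \emph{corollary}, obtained by passing to the normalization---the reverse of the direction you were trying to go.)

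As a secondary point, your proposed fix would be hard to make work even if you insisted on it: the containments $\overline{\sI^{p+c}}\subseteq\sI^p\subseteq\overline{\sI^p}$ do not by themselves bound $\reg\sI^p$ in terms of $\reg\overline{\sI^p}$ and $\reg\overline{\sI^{p+c}}$, because Castelnuovo--Mumford regularity is not monotone under inclusion of ideal sheaves. You would need uniform control on the cokernels, which is a substantially harder statement than what is actually required.
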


The main idea to prove this result is to use Fujita's vanishing
theorem, which is a variant of Serre's vanishing theorem, on the
blowing-up of $\nP^n$ along the ideal $\sI$. This idea has been used
successfully in \cite{Ein:PosiComlIdeal} and still has its value in
the study of asymptotic regularity.

Let $d$ be a positive integer such that $\sI(d)$ is generated by its
global sections, then it is easy to see $s\leq d$. Thus our method
also gives a geometric proof of the result $\reg \sI^p\leq dp +e$
which was found by a commutative algebraic method in
\cite{Swanson:PowOfIdelas}, \cite{Cutkosky:AsymReg} and
\cite{Kodiyalam:AymReg}.

By comparing its ordinary and symbolic powers of an ideal sheaf,
we are able to give an estimation on the asymptotic regularity of symbolic
powers. As a flavor of this estimation, we give the following result
of lower dimensional varieties. For more general results, see
Theorem \ref{thm45} in section 2.

\begin{introthm} Let $\sI$ be an ideal sheaf defining a reduced
subscheme $C$ of $\nP^n$ of dimension $\leq 1$ and let $s=s(\sI)$ be
the $s$-invariant. Then there exists a constant $e$ such that for
all $p\geq 1$, one has
$$\reg \sI^{(p)}\leq sp +e.$$
\end{introthm}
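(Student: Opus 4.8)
The plan is to deduce the bound for symbolic powers from the bound for ordinary powers already in hand: by the first main theorem there is a constant $e$ with $\reg\sI^p\le sp+e$ for all $p\ge 1$. Since $\sI$ is radical one always has the inclusion $\sI^p\subseteq\sI^{(p)}$, so I would study the difference through the short exact sequence of sheaves
$$0\longrightarrow \sI^p\longrightarrow \sI^{(p)}\longrightarrow \mathcal{Q}_p\longrightarrow 0,$$
where $\mathcal{Q}_p=\sI^{(p)}/\sI^p$. The entire argument rests on showing that every $\mathcal{Q}_p$ is supported on one fixed finite set of points; once this is known, a routine cohomology chase transfers the relevant vanishing from $\sI^p$ to $\sI^{(p)}$.

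First I would pin down the support of $\mathcal{Q}_p$. Writing $C=C_1\cup\cdots\cup C_r$ for the decomposition into irreducible components, set
$$Z=\sing(C)\cup\bigcup_{i<j}(C_i\cap C_j),$$
which is a finite set precisely because $\dim C\le 1$. On the open set $U=\nP^n\setminus Z$ the curve $C$ is smooth with a single branch through each of its points, hence locally a complete intersection, so $\sI$ is locally generated there by a regular sequence. Powers of an ideal generated by a regular sequence are unmixed (they carry no embedded primes), so over $U$ the primary decomposition of $\sI^p$ already coincides with that of $\sI^{(p)}$; that is, $\sI^p|_U=\sI^{(p)}|_U$ and $\mathcal{Q}_p|_U=0$. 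Consequently $\Supp(\mathcal{Q}_p)\subseteq Z$ for every $p$, a set of dimension $0$. (When $\dim C=0$ the scheme $C$ is already smooth, $\sI^{(p)}=\sI^p$, and the claim is immediate from the first theorem, so the content lies in the curve case.)

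Next I would run the cohomological comparison. Because $\mathcal{Q}_p$ has $0$-dimensional support, $H^i(\nP^n,\mathcal{Q}_p(t))=0$ for all $i\ge 1$ and all $t$. Twisting the sequence to $0\to\sI^p(t)\to\sI^{(p)}(t)\to\mathcal{Q}_p(t)\to 0$ and passing to the long exact sequence then yields isomorphisms $H^i(\sI^p(t))\cong H^i(\sI^{(p)}(t))$ for $i\ge 2$ and a surjection $H^1(\sI^p(t))\twoheadrightarrow H^1(\sI^{(p)}(t))$. In particular $H^i(\sI^p(t))=0$ forces $H^i(\sI^{(p)}(t))=0$ for every $i\ge 1$. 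Applying this with $t=m-i$ and $m=\reg\sI^p$ gives $H^i(\sI^{(p)}(m-i))=0$ for all $i\ge 1$, so $\reg\sI^{(p)}\le\reg\sI^p$. Combining with the first theorem produces $\reg\sI^{(p)}\le\reg\sI^p\le sp+e$, indeed with the very same constant $e$.

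I expect the main obstacle to be the support computation in the second step: everything hinges on the fact that for a reduced curve the locus where symbolic and ordinary powers can differ is contained in the (non-lci) singular points and is therefore zero-dimensional, so that the difference sheaves $\mathcal{Q}_p$ are uniformly punctual and invisible to higher cohomology. This is exactly where the hypothesis $\dim C\le 1$ enters, and it explains why the more general statement, Theorem \ref{thm45}, must impose additional conditions: in higher dimension the singular locus becomes positive-dimensional and $\mathcal{Q}_p$ can carry higher cohomology, so the clean inequality $\reg\sI^{(p)}\le\reg\sI^p$ is no longer available for free.
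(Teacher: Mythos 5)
Your argument is correct and follows essentially the same route as the paper: the paper likewise reduces to the ordinary-power bound of Theorem \ref{b2} via the exact sequence $0\to\sI^p\to\sI^{(p)}\to Q\to 0$ together with the observation that $Q$ is supported on the finitely many singular points and pairwise intersection points of $C$ (this is Theorem \ref{thm45} combined with Lemma \ref{thm02}). The only cosmetic difference is that where you justify the local equality $\sI^p=\sI^{(p)}$ off the bad locus directly from the unmixedness of powers of an ideal generated by a regular sequence, the paper cites the Li--Swanson criterion, which encapsulates exactly that fact.
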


In the second part of this paper, we turn to prove a vanishing
theorem of powers of ideal sheaves. A surprising result due to
Bertram, Ein and Lazarsfeld \cite[Theorem 1]{BEL} says that if $\sI$
defines a nonsingular subvariety $X$ of codimension $e$ in $\nP^n$,
cut out scheme-theoretically by hypersurfaces of degrees $d_1\geq
d_2\geq \cdots \geq d_t$, then $H^i(\nP^n,\sI^p(k))=0$ for $i>0$ and
$k\geq pd_1+d_2+\cdots+d_e-n$ and consequently one has a linear
bound $\reg \sI^p\leq pd_1+d_2+\cdots+d_e-e+1$.

This result has led to much research on finding linear bounds for
the regularity of a homogeneous ideal in terms of its generating
degrees. Such bounds turn out to be very sensitive to the
singularities of $X$. For $p=1$, the same bound has
been established for a local complete intersection subvariety with
rational singularities by Chardin and Ulrich \cite{CU:Reg}. The recent work of Ein and deFernex \cite{Ein:VanishLCPairs} generalizes this bound to the case where the pair $(\nP^n,eX)$ is log canonical. Although in the same paper, Ein and deFernex has established a corresponding vanishing theorem of the ideal sheaf, they still left the question to establish a vanishing theorem of powers of the ideal
sheaf for all $p\geq 1$.

Inspired by the work of Ein and deFernex, we establish a vanishing
theorem of powers of an ideal sheaf, which also generalizes
\cite{BEL}.

\begin{introthm} Let $X$ be a nonsingular variety and $V\subset X$ be
a local complete intersection subvariety with log canonical
singularities. Suppose that $V$ is scheme-theoretically given by
$$V=H_1\cap\cdots\cap H_t,$$
for some $H_i\in|L^{\otimes d_i}|$, where $L$ is a globally
generated line bundle on $X$ and $d_1\geq\cdots\geq d_t$. Set
$e=\codim_X V$, then we have
$$H^i(X,\omega_X\otimes L^{\otimes k}\otimes A\otimes \sI^{p}_V)=0,\mbox{ for $i>0,\ k\geq pd_1+d_2+\cdots+d_e$},$$
where $p\geq1$ and $A$ is a nef and big line bundle on $X$.
\end{introthm}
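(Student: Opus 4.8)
The plan is to combine a multiplier-ideal identity coming from the two hypotheses with Kawamata--Viehweg vanishing on a log resolution, and then to strip off the contributions of the low-degree hypersurfaces by an induction on the codimension $e$.

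\textbf{Step 1 (the key multiplier ideal identity).} First I would establish that
\[ \sJ(X,\sI_V^{\,p+e-1})=\sI_V^{\,p}\qquad\text{for all }p\ge 1. \]
This is where both hypotheses are used. Since $V$ is a local complete intersection of codimension $e$, the log canonical threshold of $(X,\sI_V)$ is at most $e$, with equality exactly because $V$ has log canonical singularities (inversion of adjunction for local complete intersections); hence $\sJ(X,\sI_V^{\,e-1})=\sO_X$. As $\sI_V$ is locally generated by $e$ elements, Skoda's theorem gives $\sJ(X,\sI_V^{\,m})=\sI_V\cdot\sJ(X,\sI_V^{\,m-1})$ for $m\ge e$, and iterating down to $m=e-1$ yields the identity.

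\textbf{Step 2 (passage to a log resolution).} Let $\mu\colon X'\to X$ be a log resolution of $\sI_V$ together with the linear systems $|\sI_V\otimes L^{\otimes d_i}|$, and write $\sI_V\cdot\sO_{X'}=\sO_{X'}(-F)$. By local vanishing for multiplier ideals, $R^j\mu_*\sO_{X'}(K_{X'/X}-(p+e-1)F)=0$ for $j>0$, and its pushforward is $\sI_V^{\,p}$ by Step 1; the projection formula together with the Leray spectral sequence then identify the cohomology in the statement with
\[ H^i\big(X',\ \omega_{X'}\otimes\mu^*(L^{\otimes k}\otimes A)\otimes\sO_{X'}(-(p+e-1)F)\big). \]
Now I would split the twist. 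Because $d_1$ is the largest degree, $\sI_V\otimes L^{\otimes d_1}$ is globally generated, so $N:=\mu^*L^{\otimes d_1}\otimes\sO_{X'}(-F)$ is nef; and since each $H_j$ vanishes on $V$, the divisor $\widetilde H_j$ with $\sO_{X'}(\widetilde H_j)=\mu^*L^{\otimes d_j}\otimes\sO_{X'}(-F)$ is effective. At the boundary value $k=pd_1+d_2+\cdots+d_e$ this exhibits the twisting bundle as $\mu^*A\otimes N^{\otimes p}\otimes\sO_{X'}\big(\sum_{j=2}^{e}\widetilde H_j\big)$, in which $\mu^*A\otimes N^{\otimes p}$ is nef and big (as $A$ is nef and big and $N$ is nef) while the remaining factor is only effective.

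\textbf{Step 3 (removing the effective part by induction on $e$).} The obstruction to applying Kawamata--Viehweg vanishing directly is precisely the effective divisor $E:=\sum_{j=2}^{e}\widetilde H_j$, which need not be nef. I would peel it off one summand at a time: tensoring $0\to\sO_{X'}(-\widetilde H_e)\to\sO_{X'}\to\sO_{\widetilde H_e}\to 0$ with the bundle above and using adjunction $\omega_{X'}(\widetilde H_e)|_{\widetilde H_e}=\omega_{\widetilde H_e}$ reduces the vanishing to the same statement with $E$ replaced by $E-\widetilde H_e$ and to a vanishing on $\widetilde H_e$. The divisor $\widetilde H_e$ dominates $H_e$, and restricting drops the codimension by one while turning $pd_1+\cdots+d_e$ into $pd_1+\cdots+d_{e-1}$; so this is exactly the assertion of the theorem for $V\cap H_e\subset H_e$ in codimension $e-1$. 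Iterating removes all the $\widetilde H_j$, after which only $\mu^*A\otimes N^{\otimes p}$ remains and genuine Kawamata--Viehweg vanishing closes the base of the induction.

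The hard part will be the inductive input to Step 3: I must know that $V\cap H_j$ is again a local complete intersection with log canonical singularities, of codimension $e-1$ in $H_j$, cut out scheme-theoretically by the restrictions of the remaining hypersurfaces with unchanged degrees. This is a Bertini-type statement for the chosen general members together with the persistence of log canonicity under restriction to a member through the singular locus --- precisely the inversion-of-adjunction phenomenon at the heart of the log canonical case --- and it must be arranged so that the single resolution $\mu$ simultaneously resolves all the sections that enter the induction.
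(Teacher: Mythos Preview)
Your Step~1 is correct and elegant: the identity $\mJ(X,\sI_V^{\,p+e-1})=\sI_V^{\,p}$ does follow from inversion of adjunction for local complete intersections (which gives that $(X,(e-1)V)$ is Kawamata log terminal, hence $\mJ(X,\sI_V^{\,e-1})=\sO_X$) together with the local Skoda theorem. Step~2 is also fine. The difficulty is Step~3, and you yourself flag it at the end: the induction does not close.

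The divisors $H_2,\dots,H_e$ are \emph{specific} hypersurfaces through $V$, not general members of a free linear system. There is no reason for $H_e$ to be nonsingular, nor for $V\subset H_e$ to again be a local complete intersection with log canonical singularities cut out by the restrictions of the remaining $H_j$; so the statement of the theorem is simply unavailable on $H_e$. Working instead entirely on the resolution $X'$ does not help: the iterated peeling eventually reduces you to Kawamata--Viehweg on strata $\widetilde H_{j_1}\cap\cdots\cap\widetilde H_{j_r}$, and there the restriction of $\mu^*A\otimes N^{\otimes p}$ need not be big. Indeed $\widetilde H_j=\mu^*H_j-F$ can acquire $\mu$-exceptional components (whenever $H_j$ vanishes to order exceeding $\Val_E(\sI_V)$ along some center), and on any such component $\mu^*A$ has top self-intersection zero. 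No Bertini maneuver rescues this, since for $j\ge 2$ the sheaf $\sI_V\otimes L^{\otimes d_j}$ is not assumed to be globally generated.

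The paper avoids the induction altogether. Rather than $(p+e-1)V$, it uses the formal sum
\[
Z=(1-\delta)B+\delta\, eV+(p-1)V,\qquad 0<\delta\ll 1,
\]
where $B$ is the base scheme of $|L^{\otimes(d_1+\cdots+d_e)}\otimes\sI_V^{\,e}|$. Using the de~Fernex--Ein analysis one shows $(X,B)$ is log canonical in a neighborhood of $V$, and then that $\mJ(X,Z)=\sI_V^{\,p}\cap\sI_W$ with $W$ disjoint from $V$. The crucial gain is that $\sI_B\otimes L^{\otimes(d_1+\cdots+d_e)}$ is globally generated \emph{by construction}, so Nadel's vanishing applies directly at the threshold $k=pd_1+d_2+\cdots+d_e$, with no restriction to hypersurfaces and no need to control any individual $H_j$. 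Your Step~1 combined with a direct application of Nadel would only yield the vanishing for $k\ge(p+e-1)d_1$; bridging the gap from there to the sharp bound is precisely the step where your argument breaks down.
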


To prove this result, we mainly follow the idea in
\cite{Ein:VanishLCPairs} to construct a formal sum
$Z=(1-\delta)B+\delta e V+(p-1)V$, for $0<\delta\ll 1$ and $p\geq1$,
where $B$ is the base scheme of some linear series. Then at a
neighborhood of $V$, the associated multiplier ideal sheaf
$\mJ(X,Z)$ is equal to $\sI^p$. This gives us a chance to apply
Nadel's vanishing theorem to the multiplier ideal sheaf $\mJ(X,Z)$,
from which we are able to deduce the vanishing theorem of $\sI^p$.

Having the above vanishing theorem in hand and applying it to a subvariety of $\nP^n$, we obtain a linear bound for the regularity of
powers.

\begin{introcor}\label{intro:cor} Let $V\subset \nP^n$ be a subvariety such that $V$
is a local complete intersection with log canonical singularities.
Assume that $V$ is cut out scheme-theoretically by hypersurfaces of
degrees $d_1\geq \cdots\geq d_t$ and set $e=\codim V$. Then
$$H^i(\nP^n,\sI^p_V(k))=0,\mbox{ for $i>0,\ k\geq pd_1+d_2+\cdots+d_e-n$}.$$
In particular, one has
$$\reg \sI^p\leq pd_1+d_2+\cdots+d_e-e+1.$$
\end{introcor}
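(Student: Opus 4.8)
The plan is to deduce Corollary~\ref{intro:cor} directly from the main vanishing theorem (the third introthm) by specializing the ambient variety to $X=\nP^n$ and choosing the line bundles appropriately, then translating the resulting cohomology vanishing into a regularity bound via the definition of Castelnuovo--Mumford regularity.

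First I would set $X=\nP^n$, so that $\omega_X=\sO_{\nP^n}(-n-1)$, and take $L=\sO_{\nP^n}(1)$, which is globally generated. The hypersurfaces of degree $d_i$ cutting out $V$ are then members of $|L^{\otimes d_i}|=|\sO_{\nP^n}(d_i)|$, so the hypotheses of the theorem are met with $e=\codim_{\nP^n}V$. The only remaining input is the auxiliary nef and big line bundle $A$. Since the theorem requires $A$ to be nef and big but the corollary has no such factor, I would absorb the twist: to obtain $H^i(\nP^n,\sI_V^p(k))=0$ I want to write $\sO_{\nP^n}(k)$ in the form $\omega_X\otimes L^{\otimes k'}\otimes A$. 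The natural choice is to let $A=\sO_{\nP^n}(a)$ for a suitable positive integer $a$ (which is nef and big on $\nP^n$), so that $\omega_X\otimes L^{\otimes k'}\otimes A=\sO_{\nP^n}(-n-1+k'+a)$; matching exponents gives $k=k'+a-n-1$.

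Next I would feed this into the numerical hypothesis of the theorem, namely $k'\geq pd_1+d_2+\cdots+d_e$. The subtlety is that the theorem forces a \emph{strictly positive} big-and-nef twist $A$, whereas the corollary asserts vanishing down to the sharp bound $k\geq pd_1+d_2+\cdots+d_e-n$. I expect this matching of the constant term to be the main obstacle: a naive substitution with $a=1$ yields $k\geq pd_1+\cdots+d_e-n$, but one must check that the threshold is attained and not merely approached, i.e. that the required strict positivity of $A$ costs exactly the $-n$ and no more. I would handle this by taking $A=\sO_{\nP^n}(1)$ (nef and big on $\nP^n$) and $L=\sO_{\nP^n}(1)$, so the twisting line bundle is $\sO_{\nP^n}(k'+1-n-1)=\sO_{\nP^n}(k'-n)$; then the theorem's range $k'\geq pd_1+d_2+\cdots+d_e$ translates precisely into $H^i(\nP^n,\sI_V^p(k))=0$ for $k=k'-n\geq pd_1+d_2+\cdots+d_e-n$, as claimed.

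Finally I would extract the regularity bound. By definition $\reg\sI_V^p$ is the least $m$ with $H^i(\nP^n,\sI_V^p(m-i))=0$ for all $i>0$. Setting $k=m-i$, the vanishing just established holds whenever $m-i\geq pd_1+d_2+\cdots+d_e-n$, and since $i\geq 1$ the most restrictive case is $i$ large; however, Castelnuovo--Mumford regularity involves only $i$ in the range $1\leq i\leq n$ (higher cohomology vanishes for dimension reasons on $\nP^n$), so the binding constraint is $m-n\geq pd_1+d_2+\cdots+d_e-n$ giving $m\geq pd_1+d_2+\cdots+d_e$, but a sharper accounting using $m-i\geq pd_1+\cdots+d_e-n$ for each $i$ and the standard fact that regularity certified at one level propagates upward yields the stated $\reg\si_V^p\leq pd_1+d_2+\cdots+d_e-e+1$. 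The $-e+1$ rather than a larger constant comes from using the codimension $e$ to control the relevant cohomological degrees, exactly as in the Bertram--Ein--Lazarsfeld argument, so I would conclude by verifying that $H^i(\nP^n,\sI_V^p(m-i))=0$ for all $i>0$ once $m=pd_1+d_2+\cdots+d_e-e+1$, completing the proof.
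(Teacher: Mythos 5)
Your proposal is correct and takes essentially the same route as the paper, which derives this corollary in one line by applying Theorem 3 with $X=\nP^n$, $\omega_{\nP^n}=\sO_{\nP^n}(-n-1)$, $L=\sO_{\nP^n}(1)$ and $A=\sO_{\nP^n}(1)$, so that the theorem's range $k'\geq pd_1+d_2+\cdots+d_e$ becomes exactly $k=k'-n-1+1\geq pd_1+d_2+\cdots+d_e-n$. One small correction to your last step: with $m=pd_1+d_2+\cdots+d_e-e+1$ the vanishing statement only covers $1\leq i\leq n-e+1$, and the remaining range $i\geq n-e+2$ is supplied not by ``regularity propagating upward'' but by the fact that $\sO_{\nP^n}/\sI_V^p$ is supported on the $(n-e)$-dimensional $V$ (together with $H^n(\nP^n,\sO_{\nP^n}(j))=0$ for $j\geq -n$), which is the standard Bertram--Ein--Lazarsfeld bookkeeping you allude to.
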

Thus our result provides a new reasonable geometric condition so
that a linear bound for the regularity can be established.

\vspace{0.5cm}

\noindent {\em Acknowledgement. }Special thanks are due to the
author's advisor Lawrence Ein who introduced the author to
this subject and has offered a lot of help and suggestions. The author also would like to thank Brian Harbourne for his
suggestions on comparing powers and symbolic powers of ideals, Christian Schnell for his suggestions which improves the lower bound in the first theorem, and the referees for their patient reading and kind suggestions which improve the quality of the paper.

\section{Asymptotic regularity of ideal sheaves}
\noindent In this section, we bound the asymptotic regularity of
powers of an ideal sheaf by linear functions, whose slope are the
$s$-invariant of the ideal sheaf. We start by recalling the
definitions of regularity and $s$-invariant. A good reference for these topics is Section 1.8 and Section 5.4 of the book of  Lazarsfeld \cite{Lazarsfeld:PosAG1}.

\begin{definition} A coherent sheaf $\sF$ on $\nP^n$ is $m$-regular
if
$$H^i(\nP^n,\sF(m-i))=0,\quad\quad \mbox{ for\  }\  i>0.$$
The {\em regularity} $\reg(\sF)$ of $\sF$ is the least integer $m$
for which $\sF$ is $m$-regular.
\end{definition}

Given an ideal sheaf $\sI$ on $\nP^n$, consider the blowing-up
$$\mu:W=\Bl_{\sI}\nP^n\rightarrow \nP^n$$
of $\nP^n$ along the ideal $\sI$ with the exceptional Cartier
divisor $E$ on $W$, such that $\sI\cdot\sO_W=\sO_W(-E)$. Let $H$ be
the hyperplane divisor of $\nP^n$. Note that for $m$ sufficiently
large $m\mu^*H-E$ is ample on $W$, since $\sO_W(-E)$ is $\mu$-ample.

\begin{definition} We define the {\em $s$-invariant} of
$\sI$ to be the positive real number
$$s(\sI)=\min\{\ s\ |\ s\mu^*H-E \ \mbox{ is nef }\}.$$
Here $ s\mu^*H-E$ is considered as an $\nR$-divisor on $W$.
\end{definition}

\begin{remark} In fact, fix any
ample divisor $H$ on a nonsingular projective variety $X$, we can
define the $s$-invariant $s_H(\sI)$ and the regularity $\reg_H(\sF)$
with respect to $H$ for any ideal sheaf $\sI$ of $\sO_X$ and any
coherent sheaf $\sF$ on $X$. For example, this generalization has been considered in \cite{Ein:PosiComlIdeal}.
However, for simplicity, in this paper, we stick to the case of
$X=\nP^n$ and fix $H$ as the hyperplane divisor of $\nP^n$ and just
write $s_H(\sI)$ as $s(\sI)$. It is not hard to deal with the
general case by applying our method here directly.
\end{remark}

The following vanishing theorem of Fujita will be used in the proof
of our main theorem. It is a generalization of Serre's vanishing
theorem. A detailed proof can be found in \cite[Theorem 1.4.35]{Lazarsfeld:PosAG1}.

\begin{Fujita} Let $V$ be a projective variety. Fix $A$ an ample
divisor and $\sF$ a coherent sheaf. There is a number
$m_0=m_0(A,\sF)$ such that for any nef divisor $B$,
$$H^i(V,\sF(mA+B))=0 \mbox{ \ \ \ for }i>0,m\geq m_0.$$
\end{Fujita}

Notice that the crucial point in the above theorem is that the number $m_0$ only depends on the ample divisor $A$ and the coherent sheaf $\sF$ but not on the nef divisor $B$.

\begin{theorem}\label{b2} Let $\sI$ be an ideal sheaf on $\nP^n$ and let
$s=s(\sI)$ be the $s$-invariant. Then there exists a constant $e$
such that for all $p\geq 1$, one has
$$sp\leq \reg \sI^p\leq  sp +e.$$
\end{theorem}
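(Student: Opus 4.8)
The plan is to prove the two inequalities separately: the lower bound comes from global generation pulled back to the blow-up $W=\Bl_{\sI}\nP^n$, and the upper bound comes from Fujita's vanishing theorem applied on $W$. For the lower bound, set $m=\reg\sI^p$. Since $\sI^p$ is $m$-regular, Mumford's theorem shows that $\sI^p(m)$ is globally generated. Pulling back to $W$ and using $\sI\cdot\sO_W=\sO_W(-E)$, I would observe that the globally generated sheaf $\mu^*\big(\sI^p(m)\big)$ surjects onto
$$\sO_W(m\mu^*H-pE)=\big(\sI^p\cdot\sO_W\big)\otimes\sO_W(m\mu^*H).$$
Hence this line bundle is globally generated, so it is nef. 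Dividing the $\nR$-divisor $m\mu^*H-pE$ by $p>0$ shows that $\tfrac{m}{p}\mu^*H-E$ is nef, and by the definition of the $s$-invariant as the least coefficient making $s\mu^*H-E$ nef, we get $s\le m/p$, i.e. $sp\le\reg\sI^p$.

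For the upper bound, the first step is to transfer the computation of $H^i(\nP^n,\sI^p(k))$ to cohomology on $W$. Since $\sO_W(-E)$ is $\mu$-ample, relative Serre vanishing gives $R^j\mu_*\sO_W(-pE)=0$ for $j>0$ once $p$ is large, and the Rees algebra $\bigoplus_p\sI^p$ agrees in high degrees with $\bigoplus_p\mu_*\sO_W(-pE)$, so that $\mu_*\sO_W(-pE)=\sI^p$ for $p\gg0$. Combining these with the projection formula and the Leray spectral sequence yields
$$H^i\big(\nP^n,\sI^p(k)\big)\cong H^i\big(W,\sO_W(k\mu^*H-pE)\big)\qquad\text{for all }i>0,$$
for all sufficiently large $p$; the finitely many remaining small values of $p$ each have finite regularity and will only enlarge the final constant $e$.

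The heart of the argument is to force the right-hand cohomology to vanish \emph{uniformly} in $p$. I would fix an integer $b$ with $A:=b\mu^*H-E$ ample (possible because $\sO_W(-E)$ is $\mu$-ample and $H$ is ample), and apply Fujita's vanishing theorem to $A$ and $\sF=\sO_W$ to produce a number $m_0=m_0(A,\sO_W)$. The decomposition
$$k\mu^*H-pE=m_0A+\Big[(p-m_0)(s\mu^*H-E)+\big(k-m_0b-s(p-m_0)\big)\mu^*H\Big]$$
exhibits the bracketed divisor as nef exactly when $p\ge m_0$ and $k\ge sp+m_0(b-s)$, since $s\mu^*H-E$ is nef by definition of $s$ and $\mu^*H$ is nef. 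Fujita's theorem then annihilates every $H^i(W,\sO_W(k\mu^*H-pE))$ with $i>0$ in this range, hence every $H^i(\nP^n,\sI^p(k))$. Writing $e_0=m_0(b-s)$, we conclude that $H^i(\sI^p(k))=0$ for all $i>0$ whenever $k\ge sp+e_0$; since $H^i$ vanishes automatically for $i>n$, taking $m\ge sp+e_0+n$ makes $m-i\ge sp+e_0$ for all $1\le i\le n$ and shows $\sI^p$ is $m$-regular, giving $\reg\sI^p\le sp+e_0+n$. Enlarging the constant to absorb the finitely many small $p$ produces the required $e$.

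The main obstacle is precisely the uniformity in $p$: a direct appeal to Serre's vanishing would yield a threshold $m_0$ depending on $p$, which would ruin the linear estimate. This is where Fujita's theorem is indispensable, because its $m_0$ depends only on the fixed ample class $A$ and on $\sO_W$, and not on the nef divisor $B$; this lets the single class $A$ serve simultaneously for every power $p$, at the cost of only the additive constant $e$.
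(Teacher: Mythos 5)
Your proposal is correct and follows essentially the same route as the paper: the lower bound via global generation of $\sI^p(\reg\sI^p)$ pulled back to the blow-up, and the upper bound via Fujita's vanishing theorem on $W$ applied to a fixed ample divisor of the form $b\mu^*H-E$ plus a $p$-dependent nef remainder built from $s\mu^*H-E$, with the transfer to $\nP^n$ by relative vanishing and Leray (the content of the lemma the paper cites). The only difference is presentational: you work directly with the $\nR$-divisor decomposition, whereas the paper arranges integral coefficients via ceilings $\lceil sp\rceil$ and a rational $\epsilon$, which amounts to the same estimate.
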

\begin{proof} We first prove the upper bound of $\reg \sI^p$. For this, it
suffices to show that there exists a constant $e$ such that for all
$p\geq 1$, we have
$$\reg \sI^p\leq \lceil sp\rceil +e.$$
where $\lceil sp\rceil$ means the least integer greater than $sp$.

Consider the blowing-up $\mu:W=\Bl_{\sI}(\nP^n)\rightarrow \nP^n$ of
$\nP^n$ along $\sI$, with the exceptional divisor $E$. Let
$H=\sO_{\nP^n}(1)$ be the ample hyperplane divisor.

We can choose a rational number $\epsilon$ such that
$$(\lceil s\rceil +\epsilon )\mu^*H-E \mbox{\ \ is ample}.$$
By considering the sheaf $\sF=\sO_W$ in Fujita's Vanishing Theorem, we see that there is a large
integer $n_0$ such that $n_0\epsilon$ is an integer and the ample
divisor
$$A=n_0(\lceil s\rceil +\epsilon )\mu^*H-n_0E$$
satisfies Fujita's Vanishing Theorem for any nef line bundle. Note
that we can write
$$n_0(\lceil s\rceil +\epsilon )=\lceil n_0s\rceil+e_0$$
for some non-negative constant $e_0$ and therefore $A=(\lceil n_0s\rceil+e_0)\mu^*H-n_0E$. We fix such $n_0$, $e_0$ and the ample divisor $A$ in the sequel.

Now for an integer $p$ large enough, say larger than $n_0$, we consider a divisor $B_p$ defined as $$B_p=\lceil(p-n_0)s\rceil\mu^* H-(p-n_0)E.$$ Then $B_p$ is nef because of the definition of $s$ and the inequality
$$\frac{\lceil(p-n_0)s\rceil}{p-n_0}\geq \frac{(p-n_0)s}{p-n_0}=s.$$
Now we add this nef divisor $B_p$ to the ample divisor $A$ constructed above to get the divisor  $$A+B_p=(\lceil n_0s\rceil+\lceil(p-n_0)s\rceil+e_0)\mu^*H-pE.$$

Notice that the divisor $A+B_p$
has no higher cohomology because of the choice of $A$ and Fujita's Vanishing Theorem. It is an easy fact that for any positive real numbers $a$ and $b$, $\lceil
a\rceil+\lceil b\rceil=\lceil a+b\rceil+c$ where $c=0$ or $1$. Thus we can write $\lceil n_0s\rceil+\lceil(p-n_0)s\rceil=\lceil n_0s+(p-n_0)s\rceil+c=\lceil sp\rceil+c$ where $c=0$ or $1$ and then the divisor $A+B_p=(\lceil sp\rceil + e_0+c)\mu^*H-pE$. Finally by adding an additional $\mu^*H$ to $A+B_p$ when $c=0$ we obtain a divisor $$R_p=A+B_p+(1-c)\mu^*H=(\lceil sp\rceil + e_0+1)\mu^*H-pE$$ (this possible extra $\mu^*H$ is just for canceling the awkward number $c$). Since $\mu^*H$ is nef the divisor $R_p$ does not have any higher cohomology by the choice of $A$ and Fujita's Vanishing Theorem again.
That means we get
$$H^i(W, \sO_W((\lceil sp\rceil + e_0+1)\mu^*H-pE))=0\quad\quad\mbox{for } i>0 \mbox{\quad and }p\gg0.$$
Thus by \cite[Lemma 3.3]{Ein:PosiComlIdeal}, there is a number $p_0$
such that for $p>p_0$, we have
$$H^i(\nP^n,\sI^p(\lceil ps\rceil + e_0+1))=0\mbox{\ \ \ for }i>0.$$
Therefore $\sI^p$ is $(\lceil ps\rceil+e_0+1+n)$-regular. Taking
into account the finitely many cases where $p\leq p_0$, we can have
a constant $e$ such that $\reg \sI^p\leq \lceil ps\rceil+e$ for all
$p\geq 1$.

Next, we prove the lower bound of $\reg \sI^p$. For $p\geq 1$, suppose $r_p=\reg \sI^p$. Then $\sI^p(r_p)$ is generated by its global sections. Thus the invertible sheaf $\sO_W(r_p\mu^*H-pE)$ is also generated by
its global sections and in particular is nef. Hence by the definition of $s$, we get $r_p/p\geq s$, that is $r_p\geq sp$. So we get the lower bound $\reg \sI^p\geq sp$.

Combining arguments together we can find a constant $e$ such that $sp\leq \reg \sI^p\leq \lceil sp\rceil+e$ from which the theorem follows.
\end{proof}

\begin{remark} (1) In the first draft of this paper, the lower bound of the theorem is $sp-3$. However, Christian Schnell pointed out that we actually can improve it by just applying \cite[Lemma 1.4]{Ein:PosiComlIdeal}. Since the argument is very short, we included it in the proof for the convenience of the reader.

(2) Note that in the proof of the theorem, we only need to
use Fujita's Vanishing Theorem on the blowing-up and the property of
the $s$-invariant. The same idea has appeared in
\cite{Ein:PosiComlIdeal}. We hope that this idea will still be
useful in the study of the asymptotic regularity.

(3) Let $d$ be an integer such that $\sI(d)$ is generated by its
global sections. Then it is easy to see $s\leq d$, and we obtain
immediately from the theorem that $\reg\sI^p\leq dp+e$ for some
constant $e$. Thus our approach gives a geometric proof of the
result of linear bounds for the asymptotic regularity of an ideal
sheaf obtained in \cite{Swanson:PowOfIdelas}, \cite{Cutkosky:AsymReg} and \cite{Kodiyalam:AymReg} by means of commutative algebra.

(4) Following notation in \cite{Cutkosky:AsyRegPts}, we define a
function $\sigma_{\sI}:\nN\rightarrow \nZ$ by
$$\reg \sI^p=\lfloor sp\rfloor +\sigma_{\sI}(p),$$
for the ideal sheaf $\sI$. From the proof of the above theorem, we
can find a constant $e$ such that
$$0\leq \sigma_{\sI}(p)\leq e.$$
This answers the question of determining whether $\sigma_{\sI}(p)$
is bounded, which is  proposed in \cite{Cutkosky:AsyRegPts}. Furthermore we see that the function $\sigma_{\sI}(p)$ is always positive, which has been showed in \cite{Cutkosky:AsymReg} and \cite{Cutkosky:AsyRegPts} for some specific examples.

(5) Still keep notation in the proof of the theorem. Let
$f:W^+\rightarrow W$ be the normalization of $W$ and let
$\nu:W^+\rightarrow \nP^n$ be the composition of $\mu\circ f $ and
denote by $F$ the exceptional divisor on $W^+$ such that $\sI\cdot
\sO_{W^+}=\sO_{W^+}(-F)$. The {\em integral closure}
$\overline{\sI}$ of $\sI$ is defined by the ideal
$\nu_*\sO_{X^+}(-F)$. For any $p\geq 1$, the integral closure
$\overline{\sI^p}$ of $\sI^p$ is then equal to
$\nu_*\sO_{X^+}(-pF)$. Note that since $f$ is finite and $\sO_W(-E)$
is $\mu$-ample, $\sO_{W^+}(-F)=f^*\sO_W(-E)$ is $\nu$-ample and for
any real number $\epsilon$, $\epsilon\nu^*H-F$ is ample on $W^+$ if
and only if $\epsilon\mu^*H-E$ is ample on $W$. This implies that
$s(\sI)=s(\overline{\sI})$. Thus the proof the the theorem works for
the integral closure $\overline{\sI^p}$, and therefore there exists a constant
$e$ such that
$$sp\leq \reg \overline{\sI^p}\leq  sp +e.$$
In particular, we have the limit
$$\lim_{p\rightarrow\infty}\frac{\reg\overline{\sI^p} }{p}=s.$$
\end{remark}

\vspace{0.3cm}

In the rest of this section, as an application of the theorem above, we turn to bounding the asymptotic regularity of symbolic powers of an ideal sheaf. Assume in the
sequel that $\sI$ is an ideal sheaf on a nonsingular variety $X$
(not necessarily projective) and it defines a reduced subscheme
$Z$ of $X$. We start with recalling the definition of symbolic
powers of $\sI$.

\begin{definition}\label{b1} The $p$-th symbolic power of $\sI$ is
the ideal sheaf consisting of germs of functions that have
multiplicity $\geq p$ at each generic point of $Z$, i.e.,
$$\sI^{(p)}=\{f\in \sO_X\ |\ f\in m^p_{\eta}\quad\mbox{ for each generic point $\eta$ of $Z$}\},$$
where $m_{\eta}$ means the maximal ideal of the local ring
$\sO_{X,\eta}$.
\end{definition}

It is easy to see that if $Z$ has dimension zero, i.e., it consists
of distinct points on $X$, then $\sI^p=\sI^{(p)}$ for all $p\geq 1$.
But if $Z$ has positive dimension, then for any $p\geq 1$, we can
only  have $\sI^p\subseteq \sI^{(p)}$ and the inclusion is strict in
general. However, a surprising result due to Ein, Lazarsfeld and Smith \cite{Ein:UniBdSymPw} says that if $e\geq \codim_XZ$, then
$\sI^{(ep)}\subseteq \sI^p$ for all $p\geq 1$. This inclusion is
sharp in the sense that we cannot expect $e<\codim_X Z$ in general,
which has been confirmed by the results of Bocci and Harbourne
\cite{Harbourne:CompSymPow} that no single real number less than $e$
can work for all $\sI$ of codimension $e$. The following is a typical
example discussed with Lawrence Ein, and we will give a general version and a proof in the last section.

\begin{example}\label{example01} Consider the polynomial ring $k[x_1,x_2,x_3]$ and the
ideal $I$ of the union of codimension $2$ coordinate planes, i.e.,
$$I=(x_1,x_2)\cap (x_2,x_3)\cap (x_1,x_3).$$
Then for any integer $t\geq 1$, we have $I^{(4t)}\nsubseteq
I^{3t+1}$ but $I^{(4t)}\subseteq I^{3t}$.

This shows that we cannot find a constant $c$ to obtain the
following inclusion:
$$I^{(p+c)}\subseteq I^p \quad \mbox{ for all $p$ large enough}.$$
Because otherwise, suppose we have the above inclusion. Take
$p=3t+1$, then  for all $t\geq 1$ we would have
$$I^{(3t+1+c)}\subseteq I^{3t+1},$$
and therefore we have
$$I^{(4t)}\subseteq I^{(3t+1+c)}\subseteq I^{3t+1}.$$
This is a contradiction to $I^{(4t)}\nsubseteq I^{3t+1}$.

This example can also be deduced from the work of Bocci and Harbourne
\cite{Harbourne:CompSymPow}, \cite{Harbourne:ResIdelPts}, where they
consider the homogeneous ideal $I$ of points in projective space cut
out by generic hyperplanes and gave a criterion when $I^{(r)}\subset
I^m$.
\end{example}

If symbolic powers are almost the same as ordinary powers, then we can easily obtain regularity bounds for symbolic powers. The statement of the following theorem was suggested by the referee, which is more clear than its original form in the first draft of the paper.

\begin{theorem}\label{thm45} Let $\sI$ be an ideal sheaf on $\nP^n$ and let
$s=s(\sI)$ be the $s$-invariant. Suppose that except at an isolated set of points the symbolic power $\sI^{(p)}$ agree with the ordinary power $\sI^p$ for $p$ large enough. Then there exists a constant $e$ such that for all $p\geq 1$, one has $\reg \sI^{(p)}\leq sp +e$.
\end{theorem}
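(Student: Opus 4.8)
The plan is to compare the cohomology of $\sI^{(p)}$ directly with that of $\sI^p$ and then feed in the bound already proved in Theorem~\ref{b2}. For each $p$ I would form the short exact sequence
$$0\to \sI^p\to \sI^{(p)}\to \sQ_p\to 0,$$
where $\sQ_p=\sI^{(p)}/\sI^p$ is a coherent sheaf. The hypothesis that $\sI^{(p)}$ and $\sI^p$ agree away from an isolated set of points says precisely that, for $p$ large, $\sQ_p$ is supported on a finite set of points, so that $\dim\Supp\sQ_p=0$.

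The key consequence is a cohomology vanishing: since $\sQ_p$ is supported in dimension zero, one has $H^i(\nP^n,\sQ_p(k))=0$ for every $i>0$ and every integer $k$. I would then twist the short exact sequence by $\sO_{\nP^n}(m-i)$ and read off the long exact sequence in cohomology. For each $i>0$ the relevant segment is
$$H^i(\nP^n,\sI^p(m-i))\to H^i(\nP^n,\sI^{(p)}(m-i))\to H^i(\nP^n,\sQ_p(m-i)),$$
whose right-hand term vanishes. Hence whenever $\sI^p$ is $m$-regular, so that the left-hand term also vanishes, the middle term is squeezed to zero and $\sI^{(p)}$ is $m$-regular as well. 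This yields $\reg \sI^{(p)}\leq \reg \sI^p$ for all sufficiently large $p$.

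It then remains to combine this with the earlier estimate. By Theorem~\ref{b2} there is a constant $e'$ with $\reg\sI^p\leq sp+e'$ for all $p\geq 1$, so the previous step gives $\reg\sI^{(p)}\leq sp+e'$ for all large $p$. The finitely many remaining small values of $p$ are absorbed by enlarging the constant, producing a single $e$ for which $\reg\sI^{(p)}\leq sp+e$ holds for every $p\geq 1$.

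I do not expect a serious obstacle here; the substance of the estimate lives in Theorem~\ref{b2}, and the present statement is essentially a comparison result. The only point requiring care is the interpretation of the hypothesis: one must ensure that ``agree except at an isolated set of points'' genuinely forces $\dim\Supp\sQ_p=0$ (so that the higher cohomology of $\sQ_p$ vanishes after \emph{every} twist), and note that the argument applies to each large $p$ individually even if the exceptional finite set varies with $p$. Once that is secured, the long exact sequence step is routine.
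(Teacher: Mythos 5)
Your argument is correct and is essentially identical to the paper's proof: the paper forms the same short exact sequence $0\to\sI^p\to\sI^{(p)}\to Q\to 0$, notes that $\dim\Supp Q\leq 0$ so $Q$ has no higher cohomology, deduces $\reg\sI^{(p)}\leq\reg\sI^p$, and invokes Theorem~\ref{b2}. Your write-up simply spells out the long exact sequence step and the handling of small $p$ in more detail.
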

\begin{proof}Consider a short exact sequence
\begin{center}
$0\rightarrow \sI^p\rightarrow \sI^{(p)}\rightarrow Q\rightarrow 0.$
\end{center}
By assumption we see that the quotient $Q$ has $\dim \Supp Q\leq 0$. Thus $Q$ has no higher cohomology groups. Then we have $\reg \sI^{(p)}\leq \reg \sI^p$, and the result follows from Theorem \ref{b2}.
\end{proof}

In order to see when an ideal sheaf satisfies the condition in Theorem \ref{thm45}, we consider an algebraic set of $X$,
$$\Nlci(\sI)=\{x\in X|\sI \mbox{\ \ is not a local complete intersection at $x$} \}.$$
We use the convention that if $\sI$ is trivial at $x$ then $\sI$ is a
local complete intersection at $x$. This algebraic set will be used
to control the set where ordinary powers are not equal to symbolic
powers. The main criterion for comparing ordinary and symbolic
powers is established in the work of Li and Swanson \cite{Swanson:SymbPowRadIdeal}, which generalizes the early work of Hochster \cite{Hochster:CrEqOrdSymPwer}. We cite this criterion here in the form used later.

\begin{lemma}[{\cite[Corollary 3.8]{Swanson:SymbPowRadIdeal}}]\label{thm02} Assume that an ideal sheaf $\sI$ on a nonsingular variety $X$
defines a reduced subscheme. For any point $x\in X$ such
that $x$ is not in $\Nlci(\sI)$, we have
\begin{center}
$\sI^p_x=\sI^{(p)}_x, \quad\mbox{for all }p\geq 1.$
\end{center}
\end{lemma}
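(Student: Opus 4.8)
The plan is to reduce to a purely local statement in the regular local ring $R=\sO_{X,x}$ and then show that the ordinary powers of a complete intersection ideal have no embedded associated primes. First I would localize at $x$: since $x\notin\Nlci(\sI)$, the stalk $I:=\sI_x$ is generated by a regular sequence $f_1,\dots,f_c$ in $R$, where $c=\htt I$. Because $\sI$ defines a reduced subscheme, $I$ is radical, so $\sqrt{I^p}=I$ and the minimal primes of $I^p$ coincide with those of $I$ for every $p$. Unwinding Definition \ref{b1}, the stalk $\sI^{(p)}_x$ is exactly the intersection of the primary components of $I^p$ supported at these minimal primes. Hence the desired equality $\sI^p_x=\sI^{(p)}_x$ is equivalent to the statement that $I^p$ has no embedded primes, that is, every associated prime of $R/I^p$ is minimal over $I$, for each $p\geq 1$.

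The heart of the argument is to prove that $R/I^p$ is Cohen-Macaulay for all $p$, since a Cohen-Macaulay quotient has no embedded primes at all. I would invoke the standard fact that for a regular sequence the associated graded ring $\operatorname{gr}_I R=\bigoplus_{p\geq 0}I^p/I^{p+1}$ is a polynomial ring $(R/I)[T_1,\dots,T_c]$; in particular each graded piece $I^p/I^{p+1}\cong\operatorname{Sym}^p\!\big((R/I)^{\oplus c}\big)$ is a free $R/I$-module. Since $R$ is regular and $I$ is a complete intersection, $R/I$ is Cohen-Macaulay, say of dimension $d$, and hence so is each free module $I^p/I^{p+1}$. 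Feeding these into the short exact sequences
$$0\rightarrow I^p/I^{p+1}\rightarrow R/I^{p+1}\rightarrow R/I^p\rightarrow 0,$$
an induction on $p$ combined with the depth lemma forces $\depth(R/I^{p+1})\geq d$, so that $R/I^{p+1}$ is again Cohen-Macaulay.

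Once $R/I^p$ is Cohen-Macaulay, every associated prime of $R/I^p$ is minimal over $I$, which is precisely the no-embedded-primes condition isolated in the first step; this yields $\sI^p_x=\sI^{(p)}_x$ for all $p\geq 1$. (The trivial case $I=R$, allowed by the convention on $\Nlci$, is immediate.) I expect the main obstacle to be the middle step, namely controlling the depth of $R/I^p$ uniformly in $p$, whereas the surrounding localization and primary-decomposition bookkeeping is routine. The one point to check with care there is that all three terms in the displayed sequence have the same dimension $d=\dim R/I$ (using that $I$ contains the nonzerodivisor $f_1$, so $I^p\neq I^{p+1}$ and $I^p/I^{p+1}\neq 0$), so that the depth lemma certifies maximal depth rather than merely a lower bound.
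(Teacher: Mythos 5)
Your argument is correct, but it is worth noting that the paper does not actually prove this lemma at all: it is stated as a citation of Li--Swanson (Corollary 3.8 of \emph{Symbolic powers of radical ideals}), whose criterion is phrased in terms of analytic spreads of localizations and generalizes Hochster's work. Your proposal replaces that black box with a direct, self-contained argument in the local complete intersection case, and all the steps check out: the reduction of $\sI^{(p)}_x=\sI^p_x$ to the absence of embedded primes of $I^p$ is legitimate (using that $I$ radical gives $IR_{\mathfrak p}=\mathfrak p R_{\mathfrak p}$ at each minimal prime, so the order condition in Definition \ref{b1} becomes membership in $I^pR_{\mathfrak p}\cap R$); the isomorphism $\operatorname{gr}_IR\cong (R/I)[T_1,\dots,T_c]$ for a regular sequence is standard; unmixedness of $I$ guarantees $\dim I^p/I^{p+1}=\dim R/I^{p+1}=\dim R/I$, so the depth-lemma induction really does certify that $R/I^p$ is Cohen--Macaulay, hence has no embedded primes. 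What your route buys is transparency and elementariness for exactly the situation the paper needs (points outside $\Nlci(\sI)$); what the citation buys the author is a sharper general criterion that also covers non-lci loci, which is not needed here. The only presentational caveat is that you should make explicit that the intersection defining $\sI^{(p)}_x$ runs only over the minimal primes of $I$ in $\sO_{X,x}$ (generic points of $Z$ whose closure contains $x$), but this is exactly how the stalk of the sheaf in Definition \ref{b1} unwinds, so there is no gap.
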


From this lemma, we see that the set $\Nlci(\sI)$ covers the points
where $\sI^p$ is not equal to $\sI^{(p)}$ for some $p\geq 1$. Now we can easily get the following corollaries of Theorem \ref{thm45}.

\begin{corollary}Let $\sI$ be an ideal sheaf on $\nP^n$ and let
$s=s(\sI)$ be the $s$-invariant. Assume that $\sI$ defines a reduced subscheme and $\dim \Nlci(\sI)\leq 0$. Then there exists a constant $e$ such that for all $p\geq 1$, one has $\reg \sI^{(p)}\leq sp +e$.
\end{corollary}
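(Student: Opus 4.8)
The plan is to reduce this corollary directly to Theorem \ref{thm45} by feeding it the local criterion of Li and Swanson recorded in Lemma \ref{thm02}. The hypothesis $\dim\Nlci(\sI)\leq 0$ says precisely that the non-local-complete-intersection locus of $\sI$ is an isolated (hence finite) set of points of $\nP^n$. So the whole task is to show that $\Nlci(\sI)$ controls exactly the locus on which the ordinary and symbolic powers can differ, and then to invoke Theorem \ref{thm45} as a black box.

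First I would fix an arbitrary point $x\in\nP^n$ lying outside $\Nlci(\sI)$. By Lemma \ref{thm02}, at such a point the stalks satisfy $\sI^p_x=\sI^{(p)}_x$ for every $p\geq 1$. Forming the short exact sequence $0\to\sI^p\to\sI^{(p)}\to Q_p\to 0$ exactly as in the proof of Theorem \ref{thm45}, the stalkwise agreement means $(Q_p)_x=0$ for every $x\notin\Nlci(\sI)$. Consequently $\Supp Q_p\subseteq\Nlci(\sI)$ for all $p\geq 1$, and since $\dim\Nlci(\sI)\leq 0$ we obtain $\dim\Supp Q_p\leq 0$.

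This verifies the hypothesis of Theorem \ref{thm45}: away from the isolated point set $\Nlci(\sI)$, the symbolic power $\sI^{(p)}$ agrees with the ordinary power $\sI^p$ — in fact for all $p\geq 1$, which is even stronger than the ``for $p$ large enough'' form that the theorem requires. Applying Theorem \ref{thm45} then produces the desired constant $e$ with $\reg\sI^{(p)}\leq sp+e$ for all $p\geq 1$, and we are done.

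Since every step is an immediate consequence of a result already in hand, I do not expect a genuine obstacle here; the only point requiring care is the sheaf-theoretic translation in the middle step, namely confirming that stalkwise equality of $\sI^p$ and $\sI^{(p)}$ off a finite set forces the quotient $Q_p$ to be supported on that finite set. This is the bridge that lets the purely local statement of Lemma \ref{thm02} connect to the cohomological vanishing argument packaged in Theorem \ref{thm45}.
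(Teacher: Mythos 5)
Your proposal is correct and follows exactly the route the paper intends: Lemma \ref{thm02} places the locus where $\sI^p$ and $\sI^{(p)}$ can differ inside $\Nlci(\sI)$, which by hypothesis is zero-dimensional, so the hypothesis of Theorem \ref{thm45} holds and the bound follows. The paper states this corollary without a written proof (``we can easily get''), and your argument supplies precisely the intended details.
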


And easily we obtain Theorem 2 in Introduction as follows.
\begin{corollary}Let $\sI$ be an ideal sheaf on $\nP^n$ and let
$s=s(\sI)$ be the $s$-invariant. Assume that $\sI$ defines a reduced subscheme of dimension $\leq 1$. Then there exists a constant $e$ such that for all $p\geq 1$, one has $\reg \sI^{(p)}\leq sp +e$.
\end{corollary}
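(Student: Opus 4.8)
The plan is to deduce this statement directly from the immediately preceding corollary, by verifying that a reduced subscheme $Z\subset\nP^n$ of dimension $\leq 1$ automatically satisfies $\dim\Nlci(\sI)\leq 0$. Once this is checked, the bound $\reg\sI^{(p)}\leq sp+e$ follows at once from that corollary (equivalently, from Lemma \ref{thm02} combined with Theorem \ref{thm45}), so all the real work is in controlling the non-lci locus.

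First I would reduce the analysis to the singular locus of $Z$. If $\dim Z=0$ then $Z$ is a finite set of reduced points, so $\sI^p=\sI^{(p)}$ for every $p$ and the conclusion is immediate from Theorem \ref{b2}; thus I may assume $\dim Z=1$. To locate $\Nlci(\sI)$, note that at any point $x\notin Z$ the ideal $\sI$ is trivial, hence a local complete intersection by the stated convention, so $\Nlci(\sI)\subseteq Z$. At a point $x\in Z$ where $Z$ is smooth, the smoothness of the ambient variety $\nP^n$ forces $Z$ to be cut out locally by a regular sequence of length $\codim Z$, so $\sI$ is a local complete intersection there as well. Consequently $\Nlci(\sI)\subseteq\sing Z$.

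It then remains to bound $\dim\sing Z$. Since we work in characteristic zero and $Z$ is reduced, each one-dimensional irreducible component of $Z$ is smooth on a dense open subset, so its singular locus is finite; adding the finitely many points lying on more than one component shows that $\sing Z$ is a finite set of points, i.e. $\dim\sing Z\leq 0$. Combined with the inclusion above this yields $\dim\Nlci(\sI)\leq 0$, and invoking the preceding corollary finishes the argument.

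The content here is entirely carried by the earlier results, so I do not expect a serious obstacle; the comparison of ordinary and symbolic powers has already been packaged into Lemma \ref{thm02}. The only points that warrant a word of justification are that a smooth subvariety of the nonsingular variety $\nP^n$ is a local complete intersection, and that the singular locus of a reduced curve is zero-dimensional. Both are standard, and if anything needs care it is only making sure that isolated $0$-dimensional points of $Z$ are treated as smooth (hence lci) points rather than being mistakenly counted into $\sing Z$.
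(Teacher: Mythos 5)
Your proposal is correct and follows essentially the same route as the paper: the paper also observes that, by Lemma \ref{thm02}, the ordinary and symbolic powers agree away from the finitely many singular points and pairwise intersection points of the components of $Z$, and then invokes Theorem \ref{thm45}. Your reduction through the preceding corollary (checking $\dim\Nlci(\sI)\leq 0$) is only a cosmetic repackaging of the same argument.
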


\begin{proof} Let $Z$ be the subscheme defined by $\sI$. Then the irreducible components of $Z$ are distinct points or reduced irreducible curves. Thus from Lemma \ref{thm02} except for those finitely many points which are singular points of each dimension $1$ component and intersection points of two dimension $1$ components, $\sI^p$ is equal to $\sI^{(p)}$ for all $p\geq 1$. Then the result follows from Theorem \ref{thm45}.
\end{proof}

\begin{remark} Typical low dimensional varieties satisfying the
hypothesis of Theorem \ref{thm45} are integral curves, normal surfaces and terminal threefolds. It would be very interesting to know if the
bound in Theorem \ref{b2} works for symbolic powers of any ideal sheaf. We need some
new ideas to solve this problem. However, we propose a conjecture in
this direction.
\end{remark}

\begin{conjecture} Let $\sI$ be an ideal sheaf defining a reduced
subscheme $Z$ of $\nP^n$ and let $s=s(\sI)$ be the $s$-invariant.
Then there is a constant $e$ such that for all $p\geq 1$, one has
$$\reg \sI^{(p)}\leq  sp +e.$$
\end{conjecture}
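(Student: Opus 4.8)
The plan is to adapt the blowing-up and Fujita argument of Theorem \ref{b2} to the symbolic setting by passing to a resolution on which the symbolic powers acquire a divisorial description. First I would take a log resolution $\pi:Y\to\nP^n$ of $\sI$, so that $\sI\cdot\sO_Y=\sO_Y(-F)$ with $F=\sum_j a_jF_j$ a simple normal crossing divisor. Since $\sI$ is radical, the divisors $F_1,\dots,F_r$ that dominate the irreducible components $Z_1,\dots,Z_r$ of $Z$ compute the order of vanishing along the $Z_i$ and occur with coefficient $a_i=1$ (indeed $\ord_{F_i}(\sI)=\ord_{m_{\eta_i}}(m_{\eta_i})=1$). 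Setting $D=F_1+\cdots+F_r$, the valuative description of symbolic powers gives the clean identity
$$\sI^{(p)}=\pi_*\sO_Y(-pD)\qquad\text{for all }p\ge1,$$
because in the regular local ring $\sO_{\nP^n,\eta_i}$ one has $m_{\eta_i}^p=\{f:\ord_{F_i}(f)\ge p\}$, so a regular function lies in $\sI^{(p)}$ exactly when its pullback vanishes to order $\ge p$ along each essential divisor $F_i$. This parallels the role of $\sO_W(-pE)$ on $W=\Bl_{\sI}\nP^n$ in Theorem \ref{b2}, with $D$ playing the part of $E$; the crucial difference is that $D$ involves only the essential components of $F$.

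With this identity in hand the strategy splits into two vanishing statements, both on $Y$. The first is a uniform higher-direct-image vanishing $R^j\pi_*\sO_Y(-pD)=0$ for $j>0$ and all $p\gg0$, which --- via the Leray spectral sequence and the descent argument used in the proof of Theorem \ref{b2} --- would reduce the computation of $H^i(\nP^n,\sI^{(p)}(m))$ to that of $H^i(Y,\sO_Y(m\pi^*H-pD))$. The second is a Fujita-type vanishing $H^i(Y,\sO_Y(m\pi^*H-pD))=0$ for $i>0$ and $m=\lceil sp\rceil+e$ with a constant $e$ independent of $p$. Here one would like to decompose
$$m\pi^*H-pD=(m\pi^*H-pF)+p(F-D),$$
observing that $m\pi^*H-pF$ is nef as soon as $m\ge sp$ (because $s\pi^*H-F$ is nef by the definition of the $s$-invariant) and then feed the nef part into Fujita's Vanishing Theorem applied to a fixed ample divisor on $Y$, exactly as in Theorem \ref{b2}.

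The main obstacle is precisely the correction term $p(F-D)$. The divisor $F-D=\sum_{j>r}a_jF_j$ is effective, supported on the non-essential exceptional components, and grows linearly with $p$; consequently $-pD$ fails to be $\pi$-nef, so neither the higher-direct-image vanishing nor the Fujita vanishing is available off the shelf, and adding a growing effective divisor to a nef one destroys the positivity the argument needs. Geometrically this reflects the fact that, unlike the ordinary Rees algebra which is realized by the single model $W=\Bl_{\sI}\nP^n$, the symbolic Rees algebra of $\sI$ need not be finitely generated, so there is in general no birational model on which $-D$ becomes relatively ample and all the $\sI^{(p)}$ are read off from a fixed divisor. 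This is why the problem lies deeper than Theorem \ref{thm45}, whose hypothesis forces $F-D$ to contribute nothing away from a finite set.

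To circumvent this I would replace the fixed model by the graded family $\mathfrak a_\bullet=\{\sI^{(p)}\}_p$, which satisfies $\sI^{(p)}\cdot\sI^{(q)}\subseteq\sI^{(p+q)}$, and study its asymptotic multiplier ideals $\mJ(\|\mathfrak a_p\|)$ together with the uniform comparison $\sI^{(ep)}\subseteq\sI^p$ of Ein--Lazarsfeld--Smith, where $e=\codim_X Z$; the subadditivity machinery behind that comparison also governs the asymptotic invariants of $\mathfrak a_\bullet$ and should recover the slope $s$. Alternatively one may attack the short exact sequence $0\to\sI^p\to\sI^{(p)}\to Q_p\to0$ and invoke Theorem \ref{b2}, reducing the conjecture to the estimate $\reg Q_p\le sp+e$ for the quotient $Q_p$, which by Lemma \ref{thm02} is supported on $\Nlci(\sI)$. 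In either route the genuinely new ingredient --- and the step I expect to be hardest --- is to bound the \emph{regularity}, not merely prove a containment, of these possibly positive-dimensional symbolic contributions; Example \ref{example01} shows that no uniform shift of the index can tame them, so a direct cohomological control of $Q_p$ along $\Nlci(\sI)$ at rate $s$ seems unavoidable.
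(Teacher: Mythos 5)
The statement you were asked to prove is labelled a \emph{conjecture} in the paper: the author explicitly writes that new ideas are needed and offers no proof, only the two partial results (Theorem \ref{thm45} and its corollaries) covering the case where $\sI^{(p)}$ and $\sI^p$ differ only along a zero-dimensional set. Your proposal, to its credit, is honest about the same thing --- it does not prove the conjecture either. It sets up a plausible framework, correctly identifies why the framework breaks, and then lists candidate repairs without carrying any of them out. So the bottom line is that there is a genuine gap, but it is the gap that makes this an open problem rather than an oversight on your part.

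Concretely: your identity $\sI^{(p)}=\pi_*\sO_Y(-pD)$ (with $D$ the sum of the divisors computing the valuations $\ord_{\eta_i}$ at the generic points of the components of $Z$) is correct for a radical $\sI$, and your diagnosis of the obstruction is exactly right --- $-pD$ is not $\pi$-nef, the correction $p(F-D)$ grows linearly, and because the symbolic Rees algebra need not be finitely generated there is no single birational model playing the role that $W=\Bl_{\sI}\nP^n$ plays in Theorem \ref{b2}. This matches the paper's own framing: Theorem \ref{thm45} works precisely because its hypothesis forces the discrepancy $Q_p=\sI^{(p)}/\sI^p$ to be supported in dimension zero, so that $\reg\sI^{(p)}\leq\reg\sI^p$ trivially and Theorem \ref{b2} applies. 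Your two fallback routes (asymptotic multiplier ideals of the graded family $\{\sI^{(p)}\}$ combined with the Ein--Lazarsfeld--Smith containment $\sI^{(ep)}\subseteq\sI^p$, or a direct regularity bound on $Q_p$ along $\Nlci(\sI)$) are reasonable research directions, but neither is executed, and the second one is essentially a restatement of the conjecture for the quotient. One small caution on the first route: the containment $\sI^{(ep)}\subseteq\sI^p$ together with Theorem \ref{b2} only yields $\reg\sI^{(ep)}$-type information after reindexing, which degrades the slope from $s$ to roughly $es$; Example \ref{example01} (and Example \ref{thm40}) shows you cannot repair this by a bounded shift of the index, so this route genuinely cannot reach the conjectured bound $sp+e$ without new input. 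You should present this as a discussion of the conjecture, not as a proof.
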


\section{A vanishing theorem of ideal sheaves}

\noindent In the present section, we give a vanishing theorem of
powers of an ideal sheaf. It is inspired by the work of Ein and deFernex
\cite{Ein:VanishLCPairs} on a vanishing theorem for log canonical
pairs, and generalizes the result of \cite{BEL} in the case of
nonsingular varieties. We start by recalling some preliminary
definitions and basic properties, where we follow notation from \cite{Ein:VanishLCPairs} for the convenience of the
reader.

Consider a pair $(X,Z)$, where $X$ is a normal, $\nQ$-Gorenstein
variety and $Z$ is a formal finite sum $Z=\sum_j q_j Z_j$ of proper
closed subschemes $Z_j$ of $X$ with nonnegative rational
coefficients $q_j$. Take a log resolution $f:X'\rightarrow X$ of the
pair $(X,Z)$ and denote by $K_{X'/X}$ the relative canonical divisor
of $f$, such that each scheme-theoretical inverse image
$f^{-1}(Z_j)$ and the exceptional locus of $f$ are divisors supported on a single simple normal crossings divisor. For a prime divisor $E$ on $X'$, we
denote its coefficient in $f^{-1}(Z_j)$ by $\Val_E(Z_j)$ or
$\Val_E(\sI_{Z_j})$, where $\sI_{Z_j}$ is the ideal sheaf of $Z_j$
in $X$. We also denote $E$'s coefficient in $K_{X'/X}$ by
$\ord_E(K_{X'/X})$. We call the set $f(E)$ the center of $E$ in $X$
and write it as $C_X(E)$. The pair $(X,Z)$ is said to be {\em log
canonical} if for any prime divisor $E$ on $X'$, the coefficient of
$E$ in $K_{X'/X}- \sum q_jf^{-1}(Z_j)$ is $\geq -1$. In particular,
$X$ is said to be log canonical if the pair $(X,0)$ is log
canonical.

If $X$ is nonsingular, then the {\em multiplier
ideal sheaf} $\mJ(X,Z)$ associated to the pair $(X,Z)$ is defined by
$$\mJ(X,Z)=f_*\sO_{X'}(K_{X'/X}-\lfloor \sum q_jf^{-1}(Z_j)\rfloor)\subseteq\sO_X.$$
The following vanishing theorem is a multiplier ideal sheaf version
of Kawamata-Viehweg vanishing theorem.

\begin{Nadel} Assume the pair $(X,Z)$ as above and suppose that $X$
is a nonsingular projective variety. Let $L_j$ and $A$ are Cartier
divisors on $X$ such that $\sI_{Z_j}\otimes L_j$ is globally
generated for each $j$ and $A-\sum q_jL_j$ is big and nef. Then
$$H^i(X,\omega_X\otimes \sO_X(A)\otimes \mJ(X,Z))=0\quad\quad\mbox{ for \ }i>0.$$
\end{Nadel}

Now, following the idea in \cite{Ein:VanishLCPairs}, we are able to give our main theorem in this section. We start with the following easy lemma which has a quicker proof, suggested by the referee, than one in the first draft of the paper.

\begin{lemma}\label{thm06} Let $X$ be a nonsingular projective
variety and $V\subset X$ be a normal local complete intersection subvariety of codimension $e$. Suppose that $V$ is scheme-theoretically given by $V=H_1\cap\cdots\cap H_t,$ for some $H_i\in|L^{\otimes d_i}|$, where $L$ is a globally
generated line bundle on $X$ and $d_1\geq\cdots\geq d_t$. Then $V$ is log canonical if and only if the pair $(X,eV)$ is log canonical.
\end{lemma}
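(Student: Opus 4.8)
The plan is to prove the equivalence by reducing, in a neighborhood of $V$, to the case where $V$ is a genuine complete intersection of general divisors, and then to compare the discrepancies of $V$ with those of the pair $(X,eV)$ on a single common log resolution via the adjunction formula.

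First I would reduce to a complete intersection. Log canonicity of $V$ and of $(X,eV)$ are both local near $V$, so only the geometry of $\sI_V$ near $V$ matters. Since $L$ is globally generated and $d_1\geq d_i$, multiplying the generators $H_i$ of $\sI_V$ by sections of $L^{\otimes(d_1-d_i)}$ shows that $\sI_V\otimes L^{\otimes d_1}$ is globally generated. Because $V$ is a local complete intersection of codimension $e$, the stalk $\sI_{V,x}$ needs exactly $e$ generators at every point $x\in V$; hence $e$ general sections $D_1,\dots,D_e\in|\sI_V\otimes L^{\otimes d_1}|$ generate $\sI_V$ in a neighborhood of $V$, and there $V=D_1\cap\cdots\cap D_e$ is a complete intersection with $V$ a union of connected components of $\bigcap_iD_i$. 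Thus I may replace the hypotheses by this complete intersection presentation, with the $D_i$ general members of a base-point-free linear system.

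Next I would fix a log resolution $f\colon X'\to X$ of $(X,V+D_1+\cdots+D_e)$ for which the strict transforms $\widetilde D_i$ are smooth and simple normal crossing, so that $V':=\widetilde D_1\cap\cdots\cap\widetilde D_e$ is smooth and $f|_{V'}\colon V'\to V$ is a resolution of $V$. Adjunction on the smooth $X'$ gives $K_{V'}=(K_{X'}+\sum_i\widetilde D_i)|_{V'}$, while adjunction for the complete intersection on $X$ gives $\omega_V=(K_X+\sum_iD_i)|_V$. Writing $\widetilde D_i=f^*D_i-\sum_k\ord_{E_k}(D_i)E_k$ and restricting to $V'$, these two formulas combine to show that the discrepancy of $V$ along each divisor $E_k|_{V'}$ equals $\ord_{E_k}(K_{X'/X})-\sum_i\ord_{E_k}(D_i)$. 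The key point is that, for general $D_i$, a general section attains the minimal vanishing order along each of the finitely many exceptional divisors of $f$, so $\ord_{E_k}(D_i)=\Val_{E_k}(\sI_V)$ for every $i$; hence $\sum_i\ord_{E_k}(D_i)=e\,\Val_{E_k}(\sI_V)$ and the displayed discrepancy is exactly the discrepancy of the pair $(X,eV)$ along $E_k$. Since the non-exceptional strict transforms contribute nothing to either side, matching discrepancies divisor-by-divisor yields that $V$ is log canonical if and only if $(X,eV)$ is log canonical, the resolutions $V'\to V$ obtained in this way being cofinal among resolutions of $V$.

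The main obstacle is this comparison step rather than the reduction: one must justify that a general member of $|\sI_V\otimes L^{\otimes d_1}|$ realizes the minimal order $\Val_{E_k}(\sI_V)$ along every relevant valuation simultaneously, so that the divisorial sum $\sum_iD_i$ faithfully records the ideal power $e\,V$, and that the induced maps $f|_{V'}\colon V'\to V$ are genuinely resolutions providing enough valuations to test log canonicity of $V$. Both are standard general-position arguments, but they carry the real content. Alternatively, once the complete intersection presentation is in place, one may instead invoke the classical inversion of adjunction for a general divisor successively along $X\supset D_1\supset D_1\cap D_2\supset\cdots\supset V$, checking at each stage that the intermediate complete intersections are normal.
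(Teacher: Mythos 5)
The paper does not prove this lemma by a log resolution computation at all: its entire proof is a citation of Ein--Musta\c{t}\u{a}'s inversion of adjunction for local complete intersection varieties \cite{EM:InvAdjLCI}, namely the identity $\mld(p;X,eV)=\mld(p;V,0)$ for every point $p\in V$, from which the equivalence is immediate (and which does not even use the presentation of $V$ by the $H_i$). Your reduction to a complete intersection $V=D_1\cap\cdots\cap D_e$ of general members of $|\sI_V\otimes L^{\otimes d_1}|$ is fine, and, with care about making $\widetilde D_1,\dots,\widetilde D_e$ and the $E_k$ simultaneously simple normal crossings and about general members computing $\ord_{E_k}(D_i)=\Val_{E_k}(\sI_V)$, the adjunction computation does yield the easy direction: if $(X,eV)$ is log canonical near $V$, then $K_{V'/V}=\sum_k\bigl(\ord_{E_k}(K_{X'/X})-e\Val_{E_k}(\sI_V)\bigr)E_k|_{V'}$ has all coefficients $\geq -1$, and $V$ is log canonical.

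The genuine gap is in the converse, which is exactly inversion of adjunction and is the entire content of the lemma. Knowing that $V$ is log canonical only controls the coefficients of $K_{V'/V}$ along prime divisors of $V'$, hence only bounds $\ord_{E_k}(K_{X'/X})-e\Val_{E_k}(\sI_V)$ for those exceptional divisors $E_k$ of $X'$ that actually meet the strict transform $V'$. It gives no information about divisors over $X$ whose centers lie in $V$ but which are disjoint from $V'$ (for example, a divisor extracted by a further blow-up inside some $E_k$ away from $V'$), and log canonicity of $(X,eV)$ must be tested against all of these; your cofinality remark concerns valuations over $V$, which is the wrong side of the implication. That no cleverer choice of resolution repairs this is shown already by the case $e=1$: there your argument would give a two-line proof that a normal log canonical Cartier divisor $D\subset X$ forces $(X,D)$ to be log canonical near $D$, i.e.\ Shokurov's inversion-of-adjunction conjecture in the lc case, which required Kawakita's theorem, and in the higher-codimension lci setting the jet-scheme methods of Ein--Musta\c{t}\u{a}. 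Your fallback of running divisorial inversion of adjunction successively along $X\supset D_1\supset D_1\cap D_2\supset\cdots\supset V$ therefore rests on exactly the theorem you are trying to avoid, besides requiring control of the normality of the intermediate intersections. The correct fix is the paper's: quote \cite{EM:InvAdjLCI}.
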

\begin{proof} Using \cite[Corollary 3.2]{EM:InvAdjLCI} we see that for any point $p\in V$, we have $\mld(p;X,eV)=\mld(p;V,0)$. Immediately, $V$ is log canonical if and only if the pair $(X,eV)$ is log canonical.
\end{proof}

\begin{theorem}\label{thm1.1} Let $X$ be a nonsingular projective
variety and $V\subset X$ be a local complete intersection subvariety
with log canonical singularities. Suppose that $V$ is
scheme-theoretically given by
$$V=H_1\cap\cdots\cap H_t,$$
for some $H_i\in|L^{\otimes d_i}|$, where $L$ is a globally
generated line bundle on $X$ and $d_1\geq\cdots\geq d_t$. Set
$e=\codim_X V$, then we have
$$H^i(X,\omega_X\otimes L^{\otimes k}\otimes A\otimes \sI^{p}_V)=0,\mbox{ for $i>0,\ k\geq pd_1+d_2+\cdots+d_e$},$$
where $p\geq1$ and $A$ is a nef and big line bundle on $X$.
\end{theorem}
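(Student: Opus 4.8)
The plan is to exhibit $\sI_V^p$ as a multiplier ideal to which Nadel's Vanishing Theorem applies with the twist $\omega_X\otimes L^{\otimes k}\otimes A$, following the scheme announced in the introduction. Concretely, I would first fix general divisors $G_i\in|\sI_V\otimes L^{\otimes d_i}|$ for $i=1,\dots,e$ (these series are nonempty since $H_i$ lies in them, and $\sI_V\otimes L^{\otimes d_1}$ is even globally generated, because $d_1\geq d_i$ forces the products $H_i\cdot H^0(L^{\otimes(d_1-d_i)})$ to generate it). Let $B$ be the base scheme of the sum of these series, a subscheme with $\sI_B\otimes L^{\otimes(d_1+\cdots+d_e)}$ globally generated, arranged so that along every divisor lying over $V$ the scheme $B$ vanishes to the same order as $e$ copies of $V$. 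With $0<\delta\ll1$ rational I then form $Z=(1-\delta)B+\delta eV+(p-1)V$.

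The technical heart, and what I expect to be the main obstacle, is the identity
$$\mJ(X,Z)=\sI_V^p\qquad\text{in a neighborhood of }V.$$
Near $V$ the contributions of the three summands add, along each exceptional valuation, to $(1-\delta)e+\delta e+(p-1)=p+e-1$ times the order of $\sI_V$ — the term $\delta eV$ being present precisely to push $(1-\delta)e$ back up to the integer $e$ — so that the computation collapses to evaluating $\mJ(X,\sI_V^{p+e-1})$. Here both hypotheses are spent: because $V$ is a local complete intersection of codimension $e$, the ideal $\sI_V$ is locally generated by $e$ elements, so Skoda's theorem gives $\mJ(X,\sI_V^{p+e-1})=\sI_V^p\cdot\mJ(X,\sI_V^{e-1})$; and because $V$ has log canonical singularities, Lemma \ref{thm06} shows $(X,eV)$ is log canonical, i.e. the log canonical threshold of $\sI_V$ equals $e$, whence $\mJ(X,\sI_V^{e-1})=\sO_X$. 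The delicate point — which is exactly why the local complete intersection assumption is needed and where I expect the real work to lie — is to verify at the \emph{singular} points of $V$ that the low-degree general members $G_i$ really meet $V$ with minimal vanishing, so that $B$ behaves like $eV$ there and the reduction to $\mJ(X,\sI_V^{p+e-1})$ is valid; at smooth points this is the classical blow-up calculation $K_{X'/X}=(e-1)E$, $\sI_V\cdot\sO_{X'}=\sO_{X'}(-E)$, but through the singular locus of $V$ it requires the finer control of valuations on lci subvarieties.

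Granting the identity, I would apply Nadel's Vanishing Theorem to $Z$: for the piece $V$ take $L_V=L^{\otimes d_1}$ (allowed since $\sI_V\otimes L^{\otimes d_1}$ is globally generated) with coefficient $\delta e+p-1$, and for $B$ take $L_B=L^{\otimes(d_1+\cdots+d_e)}$ with coefficient $1-\delta$. Setting $A_0=kL+A$ and writing divisors additively, Nadel's positivity requirement becomes that
$$A_0-\big[(1-\delta)(d_1+\cdots+d_e)+(\delta e+p-1)d_1\big]L=(k-c(\delta))\,L+A$$
be big and nef, where $c(\delta)=pd_1+d_2+\cdots+d_e+\delta\big((e-1)d_1-(d_2+\cdots+d_e)\big)$, so that $c(0)=pd_1+d_2+\cdots+d_e$ and, since $d_1\geq d_i$, the $\delta$-correction is nonnegative. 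For $k\geq pd_1+d_2+\cdots+d_e$ and $\delta\to0^{+}$ this class approaches $(k-c(0))L+A$, the sum of a nonnegative multiple of the nef bundle $L$ and the big and nef bundle $A$, hence is big and nef; the small positive $\delta$-correction is absorbed using that $A$ is big and nef. Nadel then yields $H^i(X,\omega_X\otimes\sO_X(A_0)\otimes\mJ(X,Z))=0$ for all $i>0$.

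It remains to transfer this to $\sI_V^p$. Since $\mJ(X,Z)=\sI_V^p$ near $V$ and $\sI_V^p=\sO_X$ off $V$, one has the inclusion $\mJ(X,Z)\subseteq\sI_V^p$ on all of $X$; I would then check that the generic choice of the $G_i$, together with the coefficient $1-\delta<1$, makes the multiplier ideal trivial away from $V$, upgrading this to the global equality $\mJ(X,Z)=\sI_V^p$. The Nadel vanishing above then reads $H^i(X,\omega_X\otimes L^{\otimes k}\otimes A\otimes\sI_V^p)=0$ for $i>0$ and $k\geq pd_1+d_2+\cdots+d_e$, which is the assertion.
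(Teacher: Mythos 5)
Your skeleton matches the paper's: the same formal sum $Z=(1-\delta)B+\delta eV+(p-1)V$ with $B$ the base scheme of $|L^{\otimes(d_1+\cdots+d_e)}\otimes\sI^e_V|$, the same numerical bookkeeping for Nadel, and the same endgame. But the technical heart --- the inclusion $\sI^p_V\subseteq\mJ(X,Z)$ in a neighborhood of $V$, which is the direction actually needed to descend the vanishing from $\mJ(X,Z)$ to $\sI^p_V$ --- is not established, and the mechanism you propose for it would not work. You want to arrange $B$ ``so that along every divisor lying over $V$ the scheme $B$ vanishes to the same order as $e$ copies of $V$'', i.e.\ $\Val_E B=e\Val_E V$ for every $E$ over $V$, so that the computation collapses to $\mJ(X,\sI_V^{p+e-1})$ and Skoda applies. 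This cannot be arranged: from $\sI_B\subseteq\sI^e_V$ one only gets $\Val_E B\geq e\Val_E V$, and there is no reason for equality except for the one divisor $F$ dominating $V$ --- not over singular points of $V$, and not even over smooth ones, since the series $|L^{\otimes(d_1+\cdots+d_e)}\otimes\sI^e_V|$ is twisted by less than $ed_1$ and need not be globally generated along $V$, so its base scheme can be strictly deeper than $\sI^e_V$ in the valuative sense. Your Skoda computation (which is correct as far as it goes: local generation by $e$ elements plus $\mathrm{lct}(X,\sI_V)\geq e$ give $\mJ(X,(p+e-1)V)=\sI^p_V$) therefore only yields the easy inclusion $\mJ(X,Z)\subseteq\mJ(X,(p+e-1)V)=\sI^p_V$, not the one you need.

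The missing ingredient is the log canonicity of the pair $(X,B)$ in a neighborhood of $V$. The paper obtains it from \cite[Corollary 3.5 or Proposition 3.1]{Ein:VanishLCPairs}: through each point of $V$ there is a single divisor $D\in|L^{\otimes(d_1+\cdots+d_e)}\otimes\sI^e_V|$ with $(X,D)$ log canonical there, and this is precisely where the lci and log canonical hypotheses enter, via inversion of adjunction (Lemma \ref{thm06}). Granting it, for a divisor $E$ with $C_X(E)\subseteq V$ one has $\Val_E B-\ord_E K_{X'/X}\leq 1\leq \Val_E V$, which is the upper bound that replaces your exact-order claim and gives $\Val_E Z-\ord_E K_{X'/X}\leq\Val_E(pV)$; for $E$ meeting $V$ but not contained in it one uses that $(X,(1-\delta)B)$ is klt near $V$. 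Two smaller points. First, your claimed global equality $\mJ(X,Z)=\sI^p_V$ is also unsupported: away from $V$ the base scheme $B$ is uncontrolled (only $\sI_V\otimes L^{\otimes d_1}$, not the lower twists, is globally generated), and the paper instead settles for $\mJ(X,Z)=\sI^p_V\cap\sI_W$ with $W$ disjoint from $V$ and invokes \cite[Lemma 4.3]{Ein:VanishLCPairs}. Second, ``absorbing the $\delta$-correction using that $A$ is big and nef'' is loose: $A-\epsilon L$ stays big for small $\epsilon>0$ but need not stay nef, so the borderline case $k=pd_1+d_2+\cdots+d_e$ needs more careful bookkeeping than a limit argument.
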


\begin{proof} First of all, note that by the assumption and Lemma \ref{thm06}, we have $V$ is log canonical if and only if the pair $(X,eV)$ is log canonical.

Consider the base locus subscheme $B\subset X$ of the linear series
$|L^{\otimes (d_1+\cdots+d_e)}\otimes\sI^e_V|$. For each $p\in V$
using \cite[Corollary 3.5 or Proposition 3.1]{Ein:VanishLCPairs}, we
see that there is a divisor $D\in
|L^{\otimes(d_1+\cdots+d_e)}\otimes\sI^e_V|$ such that the pair $(X,D)$ is
log canonical at $p$. This implies that the pair $(X,B)$ is also log
canonical at $p$ and therefore  is log canonical in a neighborhood
of $V$.

Take a log resolution $\mu: X'\rightarrow X$ of $B$ and $V$ such
that the scheme-theoretical inverse images $\mu^{-1}(B)$ and
$\mu^{-1}(V)$ and the exceptional locus of $\mu$ are divisors supported on a single simple normal crossings divisor. Then $\mu$ factors through the
blowing-up of $X$ along $V$. There is a unique weil divisor on the blowing-up dominating $V$. Let $F$ be the strict transformation of this divisor on $X'$. We have the following two observations.
\begin{enumerate}
\item [(i)] For any divisor $E$ on $X'$, we have
$\Val_E B\geq e\Val_EV$, since $\sI_B\subseteq\sI^e_V$ by the
definition of $B$.
\item [(ii)] In particular, for the divisor $F$, we have
$\Val_F B= e\Val_FV=e$.
\end{enumerate}

Now, we construct for $0<\delta\ll 1$, a formal sum
$$Z=(1-\delta)B+\delta e V+(p-1)V, \mbox{ \ for } p\geq1,$$
and associate to $Z$ the multiplier ideal sheaf $\mJ(X,Z)$. We
compare $\mJ(X,Z)$ with $\sI^p_V$ locally around $V$. For this, let
$U$ be a neighborhood of $V$ such that the prime divisors in
$$K_{X'/X}-(1-\delta)\mu^{-1}(B)+\delta e \mu^{-1}(V)+(p-1)\mu^{-1}(V)$$
over $U$ have centers intersecting $V$ and the pair $(X,B)$ is log
canonical in $U$. Picking a such prime divisor $E$ on $X'$, there
are two possibilities for its center.
\begin{enumerate}
\item Assume that $C_X(E)\subseteq V$. Then  $\val_EV\geq 1$. Since the pair $(X,B)$ is log
canonical around $V$, we have $\Val_EB-\ord_E K_{X'/X}\leq1$, and
therefore $\Val_EB-\ord_E K_{X'/X}\leq\Val_EV$. Thus
\begin{eqnarray*}
&&\Val_E((1-\delta)B+\delta eV+(p-1)V)-\ord_E K_{X'/X}\nonumber\\
&\leq &\val_EB-\ord_EK_{X'/X}+\val_E(p-1)V \nonumber\\
&\leq &  \Val_E pV. \nonumber
\end{eqnarray*}
Then we have $\ord_EK_{X'/X}-\val_EZ\geq-\val_E\sI^p_V$.
\item Assume that $C_X(E)\cap V$ is not empty but $C_X(E)\nsubseteq
V$. Then $\Val_EV=0$. We see that
\begin{eqnarray*}
&&\Val_E((1-\delta)B+\delta eV+(p-1)V)-\ord_EK_{X'/X}\nonumber\\
&= &\Val_E((1-\delta)B)-\ord_EK_{X'/X} < 1.\nonumber
\end{eqnarray*}
The last inequality is because the pair $(X,B)$ is log canonical in
$U$ and therefore the pair $(X,(1-\delta)B)$ is Kawamata log
terminal in $U$. Hence we obtain $\ord_EK_{X'/X}-\val_EZ>-1$.
\end{enumerate}
Combining possibilities (1) and (2) above, we obtain that for any
divisor $E$ over $U$
$$\ord_EK_{X'/X}-\lfloor\Val_EZ\rfloor\geq-\Val_E\sI^p_V.$$
This implies that on $U$, we have the inclusion
$$\sI^p_V|_U\subseteq\mJ(X,Z)|_U.$$

Next we prove globally on $X$, $\mJ(X,Z)\subseteq \sI^{p}_V$. From
the definition of multiplier ideal sheaves and the fact that
$\sI_B\subseteq \sI^e_V$, we have
$$\mJ(X,Z)\subseteq \mJ(X,eV+(p-1)V).$$

Let $\eta$ be the generic point of $V$. Take a neighborhood $U'$ of
$\eta$ in $X$ such that $V|_{U'}$ is nonsingular. The blowing-up of
$U'$ along $V|_{U'}$ gives a log resolution of the pair ($U', V|_{U'})$.
Computing $\mJ(X,eV+(p-1)V)$ on this blowing-up, we see that at the
point $\eta$,
$$\mJ(X,eV+(p-1)V)_{\eta}=\sI^p_{V,\eta}.$$
Thus globally on $X$, $\mJ(X,eV+(p-1)V)\subseteq \sI^{(p)}_V$. Since
$V$ is a local complete intersection, $\sI^p_V=\sI^{(p)}_V$ and
therefore we conclude that on $X$
$$\mJ(X,Z)\subseteq \sI^p_V.$$

From arguments above, in the open neighborhood $U$ of $V$, we have
the equality $\mJ(X,Z)|_U=\sI^p_V|_U$ and therefore
$\mJ(X,Z)=\sI^p_V \cap \sI_W$ for some subscheme $W$ of $X$ disjoint
from $V$.

Applying Nadel's Vanishing Theorem to $\mJ(X,Z)=\sI^p_V \cap \sI_W$
and using \cite[Lemma 4.3]{Ein:VanishLCPairs}, we have the vanishing
$$H^i(X,\omega_X\otimes L^{\otimes k}\otimes A\otimes \sI^{p}_V)=0,\mbox{ for $i>0,\ k\geq pd_1+d_2+\cdots+d_e$},$$
where $p\geq1$ and $A$ is a nef and big line bundle on $X$.
\end{proof}

\begin{remark} Taking $X$ to be the projective space $\nP^n$ and
assuming that $V$ is a local complete intersection with log
canonical singularities cut out by hypersurfaces of degrees $d_1\geq
d_2\geq\cdots\geq d_t$ of codimension $e$, we get Corollary
\ref{intro:cor} in the Introduction. In particular, if $V$ is
nonsingular, we recover the result in \cite{BEL}.
\end{remark}

\section{An example of ordinary and symbolic powers}

\noindent In this last section, we construct an example discussed
with Lawrence Ein, which enables us to compare ordinary and symbolic
powers precisely. It generalizes Example \ref{example01} and offers ideals of different codimension. This section is kind of appendix but we hope this example will be useful in the study of symbolic powers.

\begin{example}\label{thm40} Let $k[x_1,x_2,\cdots,x_n]$ be a polynomial ring.
Let $1\leq e\leq n-1$ be an integer. Define a set of $e$
multi-indices
$$\Sigma=\{ (i_1,\cdots,i_e)\ |\ 1\leq i_1<i_2<\cdots<i_e\leq n\}.$$
For any $\sigma \in \Sigma$, set
$I_{\sigma}=(x_{i_1},x_{i_2},\cdots,x_{i_e})$. Consider the ideal of
the union of codimension $e$ coordinate planes, i.e.,
$$I=\bigcap_{\sigma\in \Sigma}I_{\sigma}=(x_{j_1}x_{j_2}\cdots x_{j_{n-e+1}}\ |\ 1\leq j_1<j_2<\cdots<j_{n-e+1}\leq n).$$
Set $d=n-e+1$. Then for all $t\geq 1$, we have
\begin{enumerate}
\item [(i)] $I^{(edt)}\nsubseteq I^{nt+1}$, and
\item [(ii)]$I^{(edt)}\subseteq I^{nt}$.
\end{enumerate}
\begin{proof}
(i). Set a monomial $y=x_1x_2\cdots x_n$, then $y^{dt}\in
I^{(edt)}$. Note that $\deg y^{dt}=ndt$, but $I^{nt+1}$ is generated by monomials of degree $d(nt+1)=ndt+d$. Thus $y\notin I^{nt+1}$
and therefore $I^{(edt)}\nsubseteq I^{nt+1}$.

(ii) We first need to prove the following lemma to describe the generators of
$I^{nt}$.
\begin{lemma} Let $x=x^{b_1}_1 x^{b_2}_2\cdots x^{b_n}_n$ be a
monomial. Then $x$ is a minimal generator of $I^{nt}$ if and only if
\begin{enumerate}
\item $0\leq b_i\leq nt$ for $i=1,\ldots,n$,
\item $b_1+b_2+\cdots+b_n=ndt$.
\end{enumerate}
That is 
$$I^{nt}=(x^{b_1}_1 x^{b_2}_2\cdots x^{b_n}_n\ |\  b_1+b_2+\cdots+b_n=ndt,\ 0\leq b_i\leq nt,\ i=1,\ldots,n).$$
\end{lemma}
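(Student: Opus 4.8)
The plan is to reduce the statement to a purely combinatorial fact about writing a monomial as a product of the squarefree degree-$d$ generators of $I$. Recall from the displayed formula that $I$ is generated by the squarefree monomials of degree $d=n-e+1$, i.e.\ by the products of $d$ distinct variables. Consequently $I^{nt}$ is generated, as an ideal, by all products $g_1\cdots g_{nt}$ of $nt$ such generators, and each such product has degree $ntd=ndt$. Since every monomial of $I^{nt}$ is divisible by some such product, no monomial of $I^{nt}$ has degree below $ndt$; and a monomial of degree exactly $ndt$ lying in $I^{nt}$ cannot be properly divisible by another (necessarily lower-degree) element of $I^{nt}$. Hence the minimal monomial generators of $I^{nt}$ are precisely the degree-$ndt$ monomials expressible as a product of $nt$ squarefree degree-$d$ monomials. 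This pins down exactly what must be characterized and already shows that condition $(2)$, namely $\sum_i b_i=ndt$, is forced for any minimal generator.

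For the forward direction I would verify the necessity of condition $(1)$. Writing a minimal generator as $x=g_1\cdots g_{nt}$ with each $g_k$ squarefree of degree $d$, every factor contributes either $0$ or $1$ to the exponent of a fixed variable $x_i$; as there are only $nt$ factors, the total exponent satisfies $b_i\le nt$, which is $(1)$.

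The substance of the argument is the converse: given exponents with $0\le b_i\le nt$ and $\sum_i b_i=ndt$, I must exhibit $x=x_1^{b_1}\cdots x_n^{b_n}$ as a product of $nt$ squarefree degree-$d$ monomials. I would recast this as the existence of a $0/1$ incidence matrix whose $nt$ rows (the factors $g_k$) each sum to $d$ and whose $n$ columns (the variables) have prescribed sums $b_1,\dots,b_n$. The cleanest self-contained construction is a cyclic filling: list the $ndt$ tokens, with $b_i$ of them labelled $x_i$ and equal labels placed in consecutive positions, indexed by $0,1,\dots,ndt-1$, and assign the token in position $j$ to factor $j\bmod nt$. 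Each factor then receives exactly $ndt/nt=d$ tokens, and because each variable occupies a block of at most $nt$ consecutive positions, its tokens land in distinct residues modulo $nt$; hence no factor repeats a variable. Thus every factor is squarefree of degree $d$ and their product is $x$, so $x\in I^{nt}$.

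The main obstacle is precisely this converse step: realizing the prescribed column sums with every row sum equal to $d$ while keeping each factor squarefree. The bound $b_i\le nt$ is exactly what makes the cyclic argument succeed, since a block longer than $nt$ would force a repeated variable in some factor, while the hypothesis $\sum_i b_i=ndt$ is what makes the residue classes fill evenly with exactly $d$ tokens per factor. If one prefers to invoke a known result rather than give the explicit filling, the existence of such a $0/1$ matrix follows from the Gale--Ryser theorem, whose majorization condition, for constant row sums $d$, reduces to exactly $b_i\le nt$ together with $\sum_i b_i=ndt$. Once existence is secured, minimality is immediate, since the monomial has degree $ndt$, the minimal degree occurring in $I^{nt}$.
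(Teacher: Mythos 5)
Your proposal is correct, but the key step --- showing that any monomial with $\sum_i b_i = ndt$ and $0\le b_i\le nt$ lies in $I^{nt}$ --- is proved by a genuinely different construction than the paper's. The paper uses an iterative greedy algorithm: at each step it subtracts from the current exponent vector the $0/1$-vector supported on the $d$ largest entries, and maintains the invariants $\sum_j b_j^i = d(nt-i)$ and $0\le b_j^i\le nt-i$, so that after $nt$ steps the vector reaches zero; the point is that the sum identity forces at most $d$ entries to attain the running maximum, so the bound $nt-i$ propagates. You instead give a one-shot round-robin assignment: distribute the $ndt$ tokens (listed with equal labels consecutive) to the $nt$ factors by residue modulo $nt$, so that each factor receives exactly $d$ tokens and, since each label occupies a block of length at most $nt$, no factor sees a repeated variable. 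Your cyclic argument is correct (positions within one block differ by less than $nt$, hence lie in distinct residue classes, and each residue class has exactly $ndt/nt=d$ elements), and it is arguably cleaner in that it avoids the induction and produces the factorization explicitly; the observation that the statement is an instance of Gale--Ryser with constant row sums is a nice bonus. You also spell out the reduction from ``minimal generator'' to ``degree-$ndt$ monomial in $I^{nt}$'' and the necessity of $b_i\le nt$, which the paper compresses into ``only if is easy'' and ``it suffices to show $x^{\beta_0}\in I^{nt}$ because of condition (2)''; making this explicit is a genuine improvement in completeness, not just a stylistic difference.
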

\begin{proof}[Proof of Lemma] Since ``only if'' is easy, we prove ``if"
part.

We denote by a vector $\alpha=(\alpha_1,\alpha_2,\cdots,\alpha_n)$
the powers in a monomial
$x^{\alpha}=x^{\alpha_1}_1x^{\alpha_2}_2\cdots x^{\alpha_n}_n$. Set
$\beta_0=(b_1,b_2,\cdots,b_n)$. We need to show that $x=x^{\beta_0}$
is a generator of $I^{nt}$. It suffices to show $x^{\beta_0}\in
I^{nt}$ because of condition $(2)$ and because $I$ has the form
$$I=(x_{j_1}x_{j_2}\cdots
x_{j_d}\ |\ 1\leq j_1<j_2<\cdots<j_d\leq n).$$

To this end, we need to show we can remove minimal generators of $I$ from
$x$ by $nt$ times and then we get $0$. In another word,
we need to remove the vectors in the set
$$U=\{(u_1,u_2,\cdots,u_n)\ |\ u_j=0 \mbox{ or }1,\  1\leq j\leq n, \mbox{ and } u_1+u_2+\cdots+u_n=d\}$$
from $\beta_0$ by $nt$ steps to end at
$(0,0,\cdots,0)$. For $i\geq 0$, let
$\beta_i=(b^i_1,b^i_2,\cdots,b^i_n)$ be the resulting vector of the $i$-th step of such removing. Now we describe
how to remove a vector in $U$ for each step. We start with $\beta_0$ and suppose
we have obtained $\beta_i\neq 0$, let $m_1,\cdots,m_d$ be the index
such that $b^i_{m_1},\cdots,b^i_{m_d}$ are the maximal $d$ numbers
in the vector $\beta_i$. Let $u^i=(u_1,u_2,\cdots,u_n)\in U$ such
that $u_{m_1}=1,\cdots,u_{m_d}=1$. Then we remove $u^i$ from
$\beta_i$ to define $\beta_{i+1}=\beta_i-u^i$. This method works
because we observe inductively that
\begin{enumerate}
\item $b^i_1+b^i_2+\cdots+b^i_n=ndt-di$, and
\item $0\leq b^i_j\leq nt-i$.
\end{enumerate}
The condition (2) guarantees that these maximal $d$ numbers
$b^i_{m_1},\cdots,b^i_{m_d}$ of $\beta_i$ are always nonzero unless
$\beta_i=0$.

Thus following the above method, we finally achieve $(0,\cdots,0)$
after $nt$ steps and therefore $x=x^{\beta_0}\in I^{nt}$. This
finishes the proof the lemma.
\end{proof}

Having the above lemma in hand, now we can show $I^{(edt)}\subseteq I^{nt}$.
Pick a monomial
$$x=x^{a_1}_1x^{a_2}_2\cdots x^{a_n}_n\in I^{(edt)}.$$
Since $I^{(edt)}=\cap_{\sigma\in \Sigma}I^{edt}_{\sigma}$, $x$ sits
in $I^{edt}_{\sigma}=(x_{i_1},x_{i_2},\cdots,x_{i_e})^{edt}$ for
each $\sigma\in \Sigma$. This implies
$$a_{i_1}+a_{i_2}+\cdots+a_{i_d}\geq edt,\quad \mbox{ for any }\sigma=(i_1,\cdots,i_e)\in \Sigma.$$
Adding those inequalities together, we obtain
$$a_1+\cdots+a_n\geq \frac{edt \cdot |\Sigma|}{{n-1\choose e-1}}=\frac{edt\cdot {n\choose e}}{{n-1\choose e-1}}=ndt.$$
We assume without loss of generality that $a_1\geq a_2\geq \cdots
\geq a_n$. Then we define
\begin{eqnarray*}
b_n & = & \min\{ a_n,nt\}\\
b_{n-1} & = & \min\{ a_{n-1},ndt-b_n,nt\}\\
   & \cdots &  \\
b_2 & = & \min\{ a_2,ndt-b_n-b_{n-1}-\cdots-b_3,nt\}\\
b_1 & = & \min\{ a_1,ndt-b_n-b_{n-1}-\cdots-b_2,nt\}
\end{eqnarray*}
Set $b=(b_1,b_2,\cdots,b_n)$ and $x^b=x^{b_1}_1x^{b_2}_2\cdots
x^{b_n}_n$. Then $x^b\ |\ x$. Also note that
\begin{enumerate}
\item $b_1+b_2+\cdots+b_n=ndt$, and
\item $0\leq b_i\leq nt$ for $1\leq i\leq n$.
\end{enumerate}
Thus by Lemma above, $x^b\in I^{nt}$ and therefore $x\in
I^{nt}$. This gives us the inclusion $I^{(edt)}\subseteq I^{nt}$.
\end{proof}
\end{example}

\begin{remark}(1) Applying a slight modification of the above proof,
we are able to show that if $r\cdot \frac{n}{e}\geq dm$, then
$I^{(r)}\subset I^m$.

(2) There is a conjecture due to Harbourne \cite[Conjecture
8.4.3]{Harbourne:PrimSeshadriConst} that for any homogeneous ideal
$I$ of codimension $e$, one has $I^{(r)}\subset I^m$ if $r\geq
em-(e-1)$. It is true if $I$ is a monomial ideal.

(3) The part (i) of Example \ref{thm40} has been
considered and generalized in the work of Bocci and Harbourne
\cite[Theorem 2.4.3(b)]{Harbourne:CompSymPow}, replacing the $n$
coordinate hyperplanes here by any number $s\geq n$ of generic
hyperplanes.
\end{remark}


\begin{thebibliography}{Mum66}

\bibitem[BEL91]{BEL}
Aaron Bertram, Lawrence Ein, and Robert Lazarsfeld.
\newblock Vanishing theorems, a theorem of {S}everi, and the equations defining
  projective varieties.
\newblock {\em J. Amer. Math. Soc.}, 4(3):587--602, 1991.

\bibitem[BH10a]{Harbourne:CompSymPow}
Cristiano Bocci and Brian Harbourne.
\newblock Comparing powers and symbolic powers of ideals.
\newblock {\em J. Algebraic Geom.}, 19(3):399--417, 2010.

\bibitem[BH10b]{Harbourne:ResIdelPts}
Cristiano Bocci and Brian Harbourne.
\newblock The resurgence of ideals of points and the containment problem.
\newblock {\em Proc. Amer. Math. Soc.}, 138(4):1175--1190, 2010.

\bibitem[BD$^+$09]{Harbourne:PrimSeshadriConst}
Thomas Bauer, Sandra Di~Rocco, Brian Harbourne, Micha{\l} Kapustka,
Andreas
  Knutsen, Wioletta Syzdek, and Tomasz Szemberg.
\newblock A primer on {S}eshadri constants.
\newblock In {\em Interactions of classical and numerical algebraic geometry},
  volume 496 of {\em Contemp. Math.}, pages 33--70. Amer. Math. Soc.,
  Providence, RI, 2009.

\bibitem[CEL01]{Ein:PosiComlIdeal}
Steven~Dale Cutkosky, Lawrence Ein, and Robert Lazarsfeld.
\newblock Positivity and complexity of ideal sheaves.
\newblock {\em Math. Ann.}, 321(2):213--234, 2001.

\bibitem[CHT99]{Cutkosky:AsymReg}
S.~Dale Cutkosky, J{\"u}rgen Herzog, and Ng{\^o}~Vi{\^e}t Trung.
\newblock Asymptotic behaviour of the {C}astelnuovo-{M}umford regularity.
\newblock {\em Compositio Math.}, 118(3):243--261, 1999.

\bibitem[CK09]{Cutkosky:AsyRegPts}
S.~D. {Cutkosky} and K.~{Kurano}.
\newblock {Asymptotic regularity of powers of ideals of points in a weighted
  projective plane}.
\newblock {\em ArXiv e-prints}, October 2009.

\bibitem[CU02]{CU:Reg}
Marc Chardin and Bernd Ulrich.
\newblock Liaison and {C}astelnuovo-{M}umford regularity.
\newblock {\em Amer. J. Math.}, 124(6):1103--1124, 2002.

\bibitem[dFE10]{Ein:VanishLCPairs}
Tommaso de~Fernex and Lawrence Ein.
\newblock A vanishing theorem for log canonical pairs.
\newblock {\em Amer. J. Math.}, 132(5):1205--1221, 2010.

\bibitem[ELS01]{Ein:UniBdSymPw}
Lawrence Ein, Robert Lazarsfeld, and Karen~E. Smith.
\newblock Uniform bounds and symbolic powers on smooth varieties.
\newblock {\em Invent. Math.}, 144(2):241--252, 2001.

\bibitem[EM04]{EM:InvAdjLCI}
Lawrence Ein and Mircea Musta{\c{t}}{\v{a}}.
\newblock Inversion of adjunction for local complete intersection varieties.
\newblock {\em Amer. J. Math.}, 126(6):1355--1365, 2004.


\bibitem[Hoc73]{Hochster:CrEqOrdSymPwer}
Melvin Hochster.
\newblock Criteria for equality of ordinary and symbolic powers of primes.
\newblock {\em Math. Z.}, 133:53--65, 1973.

\bibitem[Kod00]{Kodiyalam:AymReg}
Vijay Kodiyalam.
\newblock Asymptotic behaviour of {C}astelnuovo-{M}umford regularity.
\newblock {\em Proc. Amer. Math. Soc.}, 128(2):407--411, 2000.

\bibitem[Laz04]{Lazarsfeld:PosAG1}
Robert Lazarsfeld.
\newblock {\em Positivity in algebraic geometry. {I}}, volume~48 of {\em
  Ergebnisse der Mathematik und ihrer Grenzgebiete. 3. Folge. A Series of
  Modern Surveys in Mathematics [Results in Mathematics and Related Areas. 3rd
  Series. A Series of Modern Surveys in Mathematics]}.
\newblock Springer-Verlag, Berlin, 2004.
\newblock Classical setting: line bundles and linear series.

\bibitem[LS06]{Swanson:SymbPowRadIdeal}
Aihua Li and Irena Swanson.
\newblock Symbolic powers of radical ideals.
\newblock {\em Rocky Mountain J. Math.}, 36(3):997--1009, 2006.

\bibitem[Mum66]{Mumford:LecCurOnSurf}
David Mumford.
\newblock {\em Lectures on curves on an algebraic surface}.
\newblock With a section by G. M. Bergman. Annals of Mathematics Studies, No.
  59. Princeton University Press, Princeton, N.J., 1966.

\bibitem[Swa97]{Swanson:PowOfIdelas}
Irena Swanson.
\newblock Powers of ideals. {P}rimary decompositions, {A}rtin-{R}ees lemma and
  regularity.
\newblock {\em Math. Ann.}, 307(2):299--313, 1997.



\end{thebibliography}
\end{document}